\documentclass[12pt,twoside,reqno]{amsart}
\usepackage{amsmath}
\usepackage{amsfonts}
\usepackage{amssymb}
\usepackage{color}
\usepackage{mathrsfs}

\usepackage{cite}
\newcommand{\rd}{\mathrm{d}}
\usepackage[mathlines]{lineno}
%
\usepackage{geometry}
\usepackage{marginnote}
\usepackage{todonotes}
\allowdisplaybreaks
\newcommand{\mA}{\mathcal{A}}
\textwidth 18cm 
\textheight 20cm 
\headheight 15pt 
\headsep 0.2in
\oddsidemargin -1cm 
\evensidemargin -1cm 
\topmargin 0cm
%
%

%
%

%
%
%
%
\newtheorem{theorem}{Theorem}[section]
\newtheorem{corollary}[theorem]{Corollary}
\newtheorem{lemma}[theorem]{Lemma}
\newtheorem{proposition}[theorem]{Proposition}

\newtheorem{remark}[theorem]{Remark}
\allowdisplaybreaks
\numberwithin{equation}{section}
\begin{document}
\title{Prevention of Infinite-time Blowup by Slightly Super-linear Degradation in a  Keller--Segel System with Density-suppressed Motility} 
\thanks{}
\author{Yamin Xiao}
\address{School of Mathematical Sciences, Hebei Normal University, Shijiazhuang 050024, Hebei Province, P.R. China}
\email{xiaoyamin@hebtu.edu.cn}

\author{Jie Jiang}
\address{Innovation Academy for Precision Measurement Science and Technology, Chinese Academy of Sciences,  Wuhan 430071, HuBei Province, P.R. China; University of Chinese Academy of Sciences,  Beijing 100049, P.R. China}
\email{jiang@apm.ac.cn, jiang@wipm.ac.cn}

\keywords{chemotaxis - classical solution - boundedness - comparison - degradation}
\subjclass{35K51 - 35K55 - 35A01}

\date{\today}

\begin{abstract}
An initial-Neumann boundary value problem for a Keller--Segel system with density-suppressed motility and source terms is considered. Infinite-time blowup of the classical solution was previously observed for its source-free version when dimension $N\geq2$. In this work, we prove that with any source term involving a slightly super-linear degradation effect on the density, of a  growth order  of $s\log s$ at most, the classical solution is uniformly-in-time bounded when $N\leq3$, thus preventing the  infinite-time explosion detected in the source-free counter-part. The cornerstone of our proof lies in an improved comparison argument and a construction of an entropy inequality. 
\end{abstract}

\maketitle

%
%
\pagestyle{myheadings}
\markboth{\sc{Y. ~Xiao \& J.~Jiang}}{\sc{Prevention of Infinite-time Blowup in a Keller--Segel System}}

\section{Introduction}In the present contribution, we consider the initial-boundary value problem
\begin{subequations}\label{ks}
	\begin{align}
		& \partial_t u = \Delta\big( u \gamma(v)\big)- u f(u), \qquad &(t,x) &\in  (0,\infty) \times\Omega, \label{ks1}\\
		& \tau \partial_t v = \Delta v -v + u ,  \qquad &(t,x) &\in (0,\infty) \times\Omega, \label{ks2}\\
		& \nabla\big( u\gamma(v)\big)\cdot \mathbf{n} = \nabla v\cdot \mathbf{n} = 0, \qquad &(t,x) &\in (0,\infty) \times\partial\Omega, \label{ks3}\\
		& (u,\tau v)(0) = (u^{in},\tau v^{in}), \qquad &x &\in\Omega. \label{ks4}
	\end{align}
\end{subequations}Here, $\Omega$ is a smooth bounded domain of $\mathbb{R}^N$, $N\le 3$, and $\mathbf{n}$ denotes the outward unit normal vector field to $\partial\Omega$. $u$ and $v$ stand for the density of cells and the concentration of the signal, respectively. $\tau\geq0$ is a constant, and $uf(u)$ describes the proliferation as well as the degradation in  population. The motility $\gamma(\cdot)$ is a monotone non-increasing function of the signal, which characterizes a repressive effect of signal concentration on cellular motility.

System \eqref{ks1}--\eqref{ks2} with  a standard logistic growth term (i.e., $uf(u)=\mu(u^2-u)$ with $\mu>0$) has recently been proposed in \cite{PhysRevLett.108.198102} to model the formation of strip patterns driven by density-suppressed motility in a bacteria environment with reproduction and degradation. As experimentally observed in \cite{PhysRevLett.108.198102,2011Science}, this model correctly captures the dynamics at the propagating front where new stripes are formed. Setting $f(\cdot)\equiv0$, we note that system \eqref{ks1}--\eqref{ks2} also belongs to a general class of the so-called Keller--Segel system modeling  the directed movement of cells in response to chemical signals \cite{1971KS}. The non-increasing monotonicity of motility function corresponds to a chemo-attraction phenomenon.

\medskip
Theoretical analysis of system \eqref{ks1}--\eqref{ks2} has attract broad interest in recent years. Assuming $f\equiv0$, if $\gamma$ is bounded from above and below, and moreover $|\gamma'|$ is bounded, it was proved in \cite{TW2017} that for convex domain $\Omega\subset\mathbb{R}^2$, the classical solution exists globally and is uniformly bounded. Later, the result was improved in \cite{XiaoJiang2022} in a sense that merely under the upper and lower boundedness assumption on $\gamma$, global classical solution always exists and is bounded on any smooth bounded (not necessarily convex) domain in $\mathbb{R}^N$ ($N\geq1$) and  the requirement on boundedness of $|\gamma'|$ is removed as well. If $\gamma$ is non-increasing and tends to zero at infinity, then system \eqref{ks1}--\eqref{ks2} may become degenerate as $v\rightarrow\infty$. Under the circumstances, it was shown in \cite{FuJi2020,FuJi2021a,JiLa2021,FuSe2022a,JLZ2022,FuSe2022b,XiaoJiang2022} that classical solution always exists globally for $N\geq1$. Moreover, its dynamics is shown to be closely related to the decay rate of $\gamma$ at infinity. More precisely, when $N=2$, the solution is uniformly bounded whenever $\gamma$ decays slower than a negative exponential function, i.e.,  $\gamma$  satisfies 
\begin{equation}\label{exp}
	\liminf\limits_{s\rightarrow\infty} e^{\chi s}\gamma(s)>0
\end{equation}
 for all $\chi>0$, typical examples being $\gamma(s)=s^{-k_1}\log^{-k_2} (1+s)$ with any $k_1,k_2>0$, or $\gamma(s)=e^{-s^{\alpha}}$ with any $0<\alpha<1$. If $\gamma(s)\sim e^{-\chi s}$ as $s\rightarrow\infty$ or, in other words, there is $\chi>0$ such that \eqref{exp} if fulfilled, then the classical solution is bounded if the conserved total mass $\|u^{in}\|_{L^1(\Omega)}$ is strictly less than a threshold number $\Lambda_c=4\pi/\chi$ \cite{FuJi2021b,JLZ2022,XiaoJiang2022}.  In the higher dimensional cases $N\geq3$, uniform-in-time boundedness was established provided that $\gamma(s)\sim s^{-k}$ as $s\rightarrow\infty$, with $0<k<\frac{N}{N-2}$ \cite{JiLa2021,JLZ2022,FuSe2022b,XiaoJiang2022}.

 Boundedness results were also obtained when various source terms get involved, as a dampening effect is brought in by the degradation in population. With a choice of standard logistic-type growth, i.e., $uf(u)=\mu (u^2-u)$, it was proved in \cite{JKW2018} that when $N=2$ and $\tau>0$, with any $\gamma\searrow0$ satisfying $\lim\limits_{s\rightarrow\infty}\frac{\gamma'(s)}{\gamma(s)}<\infty$,  there exists a uniformly-in-time bounded global classical  solution to our problem \eqref{ks} whenever $\mu>0$. A similar boundedness result was established in \cite{Winkler2023IMRN} with a weakened assumption on $\gamma$ that $\gamma>0$ on $[0,\infty)$ and $|\gamma'|+|\gamma''|\in L^\infty(0,\infty)$.  In higher dimensional case $N\geq3$, for $\gamma'<0$ and $|\gamma'|<\infty$, sufficiently largeness of $\mu$ was assumed to establish the boundedness of classical solutions in \cite{WangWang2018JMP}.  When $\tau=0$ and $N=2$, the above extra assumption on $\gamma'/\gamma$ was completely removed in   \cite{FuJi2020}, and when $N\geq3$, largeness of $\mu$ was still needed in \cite{Tello2022}. 
  Besides, generalized logistic-type source term $uf(u)=\mu u^\kappa-\lambda u$ with $\mu,\kappa,\lambda>0$ was also considered in the case $\tau=0$ \cite{LyWa2023}, boundedness of global solution was verified when any of following holds: (i) $N\leq2$, $\kappa>1$, (ii) $N\geq2$, $\kappa>2$, or (iii) $N\geq3$, $\kappa=2$ and $\mu$ is sufficiently large. Recently, existence of weak solutions was also proved in \cite{DLTW2023} when $\big(f(s)\log s\big)/s\rightarrow\infty$ as $s\rightarrow\infty$, and $\sup_{s>s_0}\big(\gamma(s)+s|\gamma'(s)|^2/\gamma(s)\big)<\infty$ for all $s_0>0$ for $N\geq2$.
   
\medskip
 Aside from the above boundedness/existence results,  particular interest was paid to the specific case $\gamma(s)=e^{-\chi s}$. Note the exponential  decay rate is critical when $N=2$. With this choice of $\gamma$, system \eqref{ks1}--\eqref{ks2} with $f\equiv0$ shares some similar important features with the extensively studied minimal Keller--Segel system that reads as
 \begin{equation}
 	\begin{cases}\label{minks}
 	u_t=\Delta u-\chi\nabla \cdot(u\nabla v)\;,\\
 	\tau v_t=\Delta v-v+u\,.
 	\end{cases}
 \end{equation}
First, under the homogeneous Neumann boundary condition, they have the same steady states $(u_s, v_s)$ which satisfies
\begin{equation}
	\begin{cases}
	-\Delta v_s+v_s= \Lambda e^{\chi v_s}/\int_\Omega e^{\chi v_s}\,\rd x\,,\qquad &x \in \Omega\\
	u_s=\Lambda e^{\chi v_s}/\int_\Omega e^{\chi v_s}\,\rd x\,,\qquad &x \in \Omega\\
	\nabla v_s\cdot\mathbf{n}=0\,,\qquad &x \in \partial\Omega,
\end{cases} 
\end{equation}with some $\Lambda>0$. More importantly, they have similar entropy-like structures. Indeed, introducing the entropy functional
\begin{equation}
	E(u,v)=\int_\Omega \bigg(u\log u+\frac\chi2|\nabla v|^2+\frac\chi2 v^2-\chi uv\bigg)\,\rd x,
\end{equation}
then for a solution of \eqref{ks} with $\gamma(v)=e^{-\chi v}$ and $f\equiv0$, there holds
\begin{equation}
	\frac{\rd }{\rd t}E(u,v)(t)+\int_\Omega ue^{-\chi v}|\nabla \log u-\chi\nabla v|^2\,\rd x+\tau\chi\|v_t\|_{L^2(\Omega)}^2=0,
\end{equation}
while for solutions of the minimal Keller--Segel system \eqref{minks} with homogeneous Neumann boundary condition, there holds
\begin{equation}
	\frac{\rd }{\rd t}E(u,v)(t)+\int_\Omega u|\nabla \log u-\chi\nabla v|^2\,\rd x+\tau\chi\|v_t\|_{L^2(\Omega)}^2=0.
\end{equation}
The only difference lies in whether there exists a weighted function $e^{-\chi v}$ in the first dissipation term. Therefore, it is quite interesting to compare the dynamical behavior of their solutions. For the minimal Keller--Segel system, it is well-known that finite-time blowup can take place when $N\geq2$.  In contrast, for the system under consideration with $f\equiv 0$,  it was shown that with the same initial condition, the classical solution  becomes unbounded as time tends to infinity \cite{FuJi2020,FuJi2021a,JW2020,BLT2020,FuSe2022a}, unveiling an intrinsic time-delay effect on the population explosion. 

\medskip

The dampening effect of (generalized) logistic-type source terms has also been studied for the minimal Keller--Segel system by adding $-uf(u)$ on the right-hand side of the first equation in \eqref{minks}. In the case of standard logistic source, i.e., $uf(u)=\mu u^2-\lambda u$, it was proved that aggregation is suppressed when $\tau=0$ and $\mu>\frac{(N-2)_+}{N}\chi$ \cite{TelloWinkler2007} (here and below, $a_+=\max\{a,0\}$). If $\tau>0$, boundedness of classical solution was examined for $N\leq2$ with any  $\mu>0$ \cite{OTYM2002}, and for $N\geq3$ with $\mu>0$ being sufficiently large  \cite{Winkler2010CPDE}. In the case of generalized logistic source, on the one hand, it was shown that superlinear absorption as $uf(u)=\mu u^\kappa-\lambda u$ with $\kappa\in(1,2)$ may fail to prevent finite-time blowup when $N\geq3$ \cite{Winkler2018zamp}. On the other hand, some sub-logistic term like $uf(u)=\mu u^2/\log^\alpha(1+u)-\lambda u$ with $\lambda\in\mathbb{R}$, $\mu>0$ and $\alpha\in(0,1)$ was verified to be sufficient to suppress finite-time blowup of classical solutions in 2D \cite{Xiang2018JMP}. More recently, it was proved in \cite{Winkler2023} that under mere an assumption that $f(s)\rightarrow\infty$ as $s\rightarrow\infty$, the fully parabolic type ($\tau>0$) minimal Keller--Segel system  with any superlinear degradation term $-uf(u)$ subjected to homogeneous Neumann boundary condition permits existence of globally defined generalized nonnegative and integrable solutions in any dimensions, ruling out the emergence of persistent Dirac-type singularities.  In comparison, the current boundedness results as we mentioned before on \eqref{ks} with $\gamma(s)=e^{-\chi s}$ in the two-dimensional case  applies only for generalized logistic-type degradation with $\kappa>1$ and $\tau=0$ \cite{LyWa2023}, or standard logistic source $\kappa=2$ and $\tau>0$ \cite{JKW2018}. In higher dimensions, boundedness is achieved when  either generalized logistic-type source of super-quadratic degradation $\kappa>2$ for $\tau=0$ \cite{LyWa2023},  or standard logistic-type  source with sufficiently largeness of $\mu$  for $\tau\geq0$ \cite{WangWang2018JMP,LyWa2023} is assumed. Thus, it is still unclear  whether the generalized logistic-type source $\kappa>1$, or even any superlinear degradation satisfying $f(s)\rightarrow\infty$ as $s\rightarrow\infty$, is enough to prevent the infinite-time blowup  detected  in the corresponding source-free ( $f\equiv0$) system \eqref{ks} with $\gamma(s)=e^{-\chi s}$ in any dimensions $N\geq2$.

The main purpose of the present contribution is to give a partial affirmative answer to the above question. Indeed, we prove that when $N=2,3$, $\tau\geq0$ and $\gamma(s)=e^{-\chi s}$, any superlinear degradation satisfying that  $f(s)\rightarrow\infty$ , and $f(s)/\log s<\infty$ as $s\rightarrow\infty$, is sufficient to guarantee the boundedness of classical solutions, thus foiling the observed infinite-time blowup phenomenon in the source-free counterpart. Compared with  the finite-time blowup results of the minimal Keller--Segel system with certain sub-quadratic growth source when $N=3$ \cite{Winkler2018zamp}, our result unveils a significant new feature in the dynamical behavior of  solutions to our system \eqref{ks} .

\medskip
 To begin with, we introduce some basic assumptions and notations. Throughout this paper we use the short notation $\|\cdot\|_{p}$ for the norm $\|\cdot\|_{L^p(\Omega)}$ with  $p\in[1,\infty]$, and we use $\|\cdot\|$ to denote the $L^2$-norm for simplicity.
  For the initial condition $(u^{in}, \tau v^{in})$, we require that
\begin{equation}\label{ini}
	\begin{split}
		& \left( u^{in},\tau v^{in} \right)\in C(\bar{\Omega})\times  W^{1,\infty}(\Omega)\,, \quad u^{in}\not\equiv 0\,, \\
		& u^{in}\ge 0\,, \quad \tau v^{in}\ge 0\quad  \mbox{in } \bar\Omega\,.
	\end{split}
\end{equation} 
For the source term, we assume that $f(\cdot)\in C^{1}([0,\infty))$, and moreover
\begin{equation}\label{f1}
	\lim\limits_{s\rightarrow\infty}f(s)=\infty
\end{equation}and
\begin{equation}\label{f2}
	\limsup\limits_{s\rightarrow\infty}f(s)/\log s<\infty.
\end{equation}
A typical example is $f(s)=\mu\log^\alpha(1+s)-\lambda$ with $\mu>0$, $\lambda\in\mathbb{R}$ and $0<\alpha\leq 1$.

\bigskip
 We are now in a position to state our main outcome concerning global existence of uniformly-in-time bounded classical solutions to \eqref{ks}.
\begin{theorem}\label{Th1}
	Assume $N=2,3$, $\tau\geq0$ and $\gamma(s)=e^{-s}$. For any given initial data $(u^{in}, \tau v^{in})$ satisfying \eqref{ini}, the initial-boundary value problem \eqref{ks} has a unique  global non-negative classical solution $(u,v)\in \bigg(C\big([0,\infty)\times\bar{\Omega}\big) \cap C^{1,2}\big((0,\infty)\times\bar{\Omega}\big)\bigg)^2$, which is  uniformly-in-time bounded, i.e.,
	\begin{equation*}
		\sup_{t\ge 0}\left\{ \|u(t)\|_\infty + \|v(t)\|_{\infty} \right\} < \infty\,. 
	\end{equation*}
\end{theorem}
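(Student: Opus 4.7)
The plan is to combine a local well-posedness step with two a priori ingredients that the authors themselves highlight: an improved pointwise comparison between $u$ and a slowly growing function of $v$, and a modified entropy inequality in the spirit of the source-free identity recalled in the introduction. Standard parabolic fixed-point theory first yields a unique nonnegative classical solution $(u,v)$ on a maximal interval $[0,T_{\max})$ with the extensibility criterion $\limsup_{t\nearrow T_{\max}}\|u(t)\|_\infty=\infty$ if $T_{\max}<\infty$, so the task reduces to a uniform $L^\infty$ bound on $u$. As a first easy consequence, integrating \eqref{ks1} in space gives $\frac{\rd}{\rd t}\|u\|_1=-\int_\Omega uf(u)\,\rd x$, and the superlinearity \eqref{f1} together with a de la Vall\'ee Poussin--type argument furnishes a uniform $L^1$ bound on $u$, whence also on $v$ via \eqref{ks2}.

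For the comparison step I would rewrite \eqref{ks1} in non-divergence form, using \eqref{ks2} in the form $\Delta v=\tau v_t+v-u$ to eliminate $\Delta v$:
\begin{equation*}
\partial_t u = e^{-v}\bigl(\Delta u-2\nabla v\cdot\nabla u+u|\nabla v|^2+u^2-uv-\tau u v_t\bigr)-uf(u).
\end{equation*}
The dangerous term is $+e^{-v}u^2$, which is exactly what drives infinite-time growth in the source-free case. The idea is to construct a scalar super-solution $w$, evolving by an auxiliary (scalar) parabolic or ODE problem that carries the same non-linear amplification but with a dampening $-wf(w)$; since \eqref{f2} is the borderline hypothesis, one expects $w$ to be bounded at most by a quantity of the form $e^{Cv}$, and a parabolic comparison then yields a pointwise estimate of the type $u(t,x)\le\Psi(v(t,x))$ with $\Psi$ slowly growing. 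Equivalently, after inversion and using the $L^1$ mass bound, this provides an inequality of the form $v(t,x)\le C\log(1+u(t,x))+C$, which is the shape in which the next step will exploit it.

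Next I would set up an entropy functional
\begin{equation*}
E(u,v)=\int_\Omega u\log u\,\rd x+\tfrac12\int_\Omega\bigl(|\nabla v|^2+v^2\bigr)\rd x-\int_\Omega uv\,\rd x,
\end{equation*}
whose time derivative, after using both equations and integrating by parts, produces the familiar Fisher-information dissipation $\int_\Omega u e^{-v}|\nabla\log u-\nabla v|^2\,\rd x+\tau\|v_t\|^2$ together with the source contribution $-\int_\Omega(1+\log u-v)uf(u)\,\rd x$. Invoking the pointwise majorant $v\le C\log(1+u)+C$ from the comparison step, the troublesome piece $\int_\Omega v\,uf(u)\,\rd x$ is dominated by $\int_\Omega uf(u)\log(1+u)\,\rd x$, and hence by \eqref{f2} by $\int_\Omega u\log^2(1+u)\,\rd x$. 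In dimension $N\le 3$ this integral is absorbed into the Fisher-information term by a weighted Gagliardo--Nirenberg inequality (using the $L^1$ control on $u$ and the lower bound $e^{-v}\ge(1+u)^{-C}$), whereas the remaining terms $uf(u)\log u$ with favorable sign are controlled directly by the degradation; integrating in time, this yields uniform bounds on $\int_\Omega u\log u\,\rd x$ and on $\|\nabla v\|$.

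From these, a now standard bootstrap closes the proof: combining the $L\log L$ bound with the pointwise majorant on $v$ and testing \eqref{ks1} against $u^{p-1}$ upgrades integrability of $u$, which through semigroup estimates on \eqref{ks2} feeds back to improved control of $\nabla v$, so that a Moser--Alikakos iteration eventually reaches $L^\infty$. The principal obstacle, as suggested by the title, is the comparison in the second step: the borderline growth $f(s)\asymp\log s$ leaves almost no slack, so the scalar super-solution has to be calibrated so precisely that the $e^{-v}u^2$ amplification is beaten by the logarithmic degradation by just the right margin; any weakening either of the exponential motility decay or of \eqref{f2} would derail the estimate, which is why the theorem stops at $N=3$ and why $f(s)/\log s<\infty$ is the natural ceiling for this scheme.
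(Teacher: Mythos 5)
Your overall scaffolding (local theory, $L^1$ bounds, an entropy estimate, then a bootstrap) matches the paper's, but the pivotal second step — the comparison — is not workable as described, and it is precisely the step the whole theorem hinges on. Writing \eqref{ks1} in non-divergence form and trying to dominate $u$ by a scalar super-solution fails for three reasons. First, the amplification term is $e^{-v}u^2\le u^2$, while the damping is $uf(u)\lesssim u\log u$ by \eqref{f2}; the scalar majorant $w'=w^2-wf(w)$ therefore blows up in finite time, so logarithmic degradation cannot beat the quadratic term at the pointwise/ODE level — there is no "just the right margin" to calibrate. Second, a super-solution of the form $\Psi(v)$ would have to absorb the uncontrolled terms $e^{-v}u|\nabla v|^2$ and $-\tau e^{-v}uv_t$, for which no pointwise information is available at this stage. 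Third, even granting $u\le e^{Cv}$, inverting it yields $v\ge C^{-1}\log u$, a \emph{lower} bound on $v$, not the claimed upper bound $v\le C\log(1+u)+C$; and such a pointwise upper bound is anyway suspect because $v$ depends nonlocally on $u$ and can be large where $u$ is small. The paper's actual mechanism is entirely different and nonlocal: with $w=\mathcal{A}^{-1}[u]$, $\mathcal{A}=I-\Delta$, the key identity \eqref{keyid} plus the superlinearity \eqref{f1a} give $\partial_t w+w\le b_1$, hence a uniform $L^\infty$ bound on $w$; a second auxiliary function $h$ solving $\tau h_t+\mathcal{A}h=\mathcal{A}^{-1}[uf(u)+b_0]$ absorbs the new nonlocal term, and the heat-equation comparison yields $v\le w+\tau h+C$. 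Boundedness of $v$ then follows from the entropy estimate (which gives $\|uf(u)\|_1\le C$) and $L^p$–$L^q$ semigroup bounds for $h$, valid only for $N\le3$.

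Two further steps are also under-justified. In your entropy argument the term $\int_\Omega uf(u)v\,\rd x$ is reduced to $\int_\Omega u\log^2(1+u)\,\rd x$ and then "absorbed into the Fisher information" $\int_\Omega ue^{-v}|\nabla\log u-\nabla v|^2\,\rd x$ via a weighted Gagliardo--Nirenberg inequality with the degenerate weight $e^{-v}$; no such inequality is exhibited, and the paper avoids this entirely by exploiting the one-sided control $v\le w+\tau h+C$: the piece $\tau\int_\Omega(uf(u)+b_0)h\,\rd x$ cancels against the $h$-energy identity, and the remainder is handled by the time-averaged bound \eqref{u1b} together with the Gronwall-type Lemma \ref{lemgron}. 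Finally, in three dimensions the closing bootstrap is not as routine as you suggest: testing \eqref{ks1} against $u^{p-1}$ requires $\nabla v\in L^r$ with $r>3$, which the paper obtains only after proving uniform H\"older continuity of $w$ and $v$ (local energy estimates in the style of Ladyzhenskaya--Solonnikov--Uraltseva, then Amann's semigroup theory for the non-autonomous operator $-e^{-v}\Delta$); this extra layer cannot be skipped, since the pure energy method used in 2D does not close in 3D under only \eqref{f1}--\eqref{f2}.
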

\begin{remark}
The proof is carried out for the fully parabolic case $\tau>0$ in detail. The parabolic-elliptic case $\tau=0$ follows from a simple modification and we will provide remarks where necessary. 
\end{remark}

The main difficulty in analysis of our problem lies in deriving a uniform-in-time upper bound of $v$, which is  overcome by an improved version of a comparison argument originating from \cite{FuJi2021a,FuJi2020}, together with establishing a uniform-in-time estimate for an entropy-like functional of the form
$\int_\Omega u\log u\,\rd x$.

To expose the main strategy of our proof, let us first briefly recall the idea to get the upper bound of $v$ via comparison argument developed in \cite{FuJi2021a} for the fully parabolic system \eqref{ks} with $f\equiv0$. Denoting $\mathcal{A}:=I-\Delta$, with $\Delta$ being the  Laplacian operator subject to homogeneous Neumann boundary condition, it was first introduced an auxiliary function defined by $w(t,x)=\mathcal{A}^{-1}[u](t,x)$, which satisfies, by taking the inverse  operator $\mathcal{A}^{-1}$ on both sides of \eqref{ks1},  a key identity that reads
\begin{equation}\label{kid}
	\partial_t w+u\gamma(v)=\mathcal{A}^{-1}[u\gamma(v)].
\end{equation}
For the sake of simplicity, we now assume $\gamma(s)=e^{-s}$, although the comparison argument in \cite{FuJi2021a} applies to general non-increasing motility functions. Since $e^{-s}\leq1$ for all $s\geq0$, we may infer by the comparison principle of elliptic equations that $\mathcal{A}^{-1}[ue^{-v}]\leq \mathcal{A}^{-1}[u]=w$, which together with the non-negativity of $ue^{-v}$ gives rise to $\partial_t w\leq w$ from \eqref{kid}. Thus, $w(t,x)$ is bounded from above. Note that if $\tau=0$, $w$ is identical to $v$ and thus $v$ is bounded in that case \cite{FuJi2020}. However, when $\tau>0$, we easily see from \eqref{ks2} that $w=v+\tau\mathcal{A}^{-1}[\partial_t v]$. Hence, a second comparison was designed in \cite{FuJi2021a}, with the help of the non-increasing monotonicity of $\gamma$, the key identity \eqref{kid} and \eqref{ks2}, deriving that $\mathcal{L}[v]\leq \mathcal{L}[w-\tau e^{-v}+C]$, with $\mathcal{L}:=\tau\partial_t+\mathcal{A}$ and some $C>0$ depending only on initial data. Then, comparison principle of heat equations implies that $v\leq w+C$, yielding an upper bound for $v$ finally. 

\medskip
When a source term $-uf(u)$ involving super-linear degradation is considered in \eqref{ks},  one can easily deduce a similar key identity as before in the form
\begin{equation}\label{kid0}
	\partial_t w+u\gamma(v)+\mathcal{A}^{-1}[uf(u)]=\mathcal{A}^{-1}[u\gamma(v)].
\end{equation}
The super-linearity of $uf(u)$ together with Young's inequality will lead us to a differential inequality $\partial_t w+\varepsilon_0 w\leq C$, with some $\varepsilon_0,C>0$ independent of time, and thus uniform boundedness of $w$ follows. Note that  $v=w$ if $\tau=0$ and hence $v$ is uniformly bounded in that case \cite{FuJi2020,LyWa2023}. 
 However, if $\tau>0$, the crucial second comparison between $v$ and $w$ becomes tricky due to the presence of the new non-local term $\mathcal{A}^{-1}[uf(u)]$. Indeed, in the same manner as before, we deduce that $\mathcal{L}[v]\leq \mathcal{L}[w-\tau e^{-v}+C]+\tau \mathcal{A}^{-1}[uf(u)]$, with the upper boundedness of the non-local term being unknown in general. In order to tackle this issue, in the present contribution, we introduce a new second auxiliary function $h(t,x)$, which satisfies that $\mathcal{L}[h]=\mathcal{A}^{-1}[uf(u)]$ with some given initial datum. It now allows us to apply comparison principle of heat equations to deduce  that $v\leq w+\tau h+C$. As a result, it suffices to derive the $L^\infty$-boundedness of $h$. To this aim, we further exploit the nice entropy structure of \eqref{ks} with the particular choice of $\gamma(s)=e^{-s}$, which provides a uniform estimates involving $\|u\log u\|_1$ by delicate calculations. Here, the sub-logarithmic growth of $f$ \eqref{f2}, and the one-sided control $v\leq w+\tau h+C$ are both crucial to our analysis.  Then, $L^1$-boundedness of $uf(u)$ follows since $sf(s)\leq Cs\log s+C$ due to \eqref{f2} again, and standard regularity theory of heat equation ensures a time-independent $L^\infty$-bound for $h$ for $N\leq3$ eventually. Once the boundedness of $v$ is obtained, we next show the uniform boundedness of $u$ by standard energy method when $N=2$, and by semi-group theory when $N=3$, respectively. In the latter case,  $L^1$-boundedness of $uf(u)$ is needed again to derive a H\"older continuity of $v$, which enables us to further apply the semi-group theory.

\medskip
The paper is organized as follows. In Section 2, we introduce the result on  existence and uniqueness of local classical solutions,  and some useful lemmas. In Section 3, we establish the uniform upper bound for $v$ by the improved comparison argument together with energy method. Boundedness of $u$ is established in Section 4 for $N=2$, and  in the last section  for $N=3$, respectively.

\section{Preliminaries}
In this section, we recall some useful results. First, since $f\in C[0,\infty)$ and  assumption \eqref{f1} holds, for any $a_1>0$ there is $b_1\geq0$ depending on $a_1$ and $f$ such that
\begin{equation}\label{f1a}
	sf(s)\geq a_1s-b_1,\qquad\text{for all}\;s\geq0,
\end{equation}which in particular, implies that there is $b_0\geq0$ being fixed throughout this paper (taking $a_1=1$ in \eqref{f1a}) such that
\begin{equation}\label{f1b}
	sf(s)+b_0\geq 0,\qquad\text{for all}\;s\geq0.
\end{equation}
Moreover, by \eqref{f2}, there are $a_2, b_2>0$ such that
\begin{equation}\label{f2a}
	sf(s)\leq a_2s\log s+b_2,\qquad\text{for all}\;s\geq0.
\end{equation}

Next, we show existence and some basic properties of the local classical solutions.
\begin{proposition}\label{prop2.1}
	Assume that $(u^{in}, \tau v^{in})$ satisfy \eqref{ini} with $m=\|u^{in}\|_1>0$. Then the initial-boundary value problem~\eqref{ks} has a unique non-negative classical solution 
	\begin{equation*}
		(u,v)\in \bigg(C\big([0,T_{\mathrm{max}})\times\bar{\Omega}\big) \cap C^{1,2}\big((0,T_{\mathrm{max}})\times\bar{\Omega}\big)\bigg)^2,
	\end{equation*} 
	defined on a maximal time interval $[0,T_{\mathrm{max}})$ with $T_{\mathrm{max}}\in (0,\infty]$. 
	If $T_{\mathrm{max}}<\infty$, then
	\begin{equation}
		\lim_{t\to T_{\mathrm{max}}} \|u(t)\|_\infty = \infty. \label{gec}
	\end{equation}

	In addition, for arbitrary $\varepsilon>0$, there is $C(\varepsilon)>0$ depending only on $\varepsilon,$ $u^{in}$, $\Omega$ and $f$ such that
\begin{equation}
	\|u(t)\|_1+\frac12\int_0^t e^{\varepsilon(s-t)}\int_\Omega (uf(u)+b_0)\,\rd x\rd s\leq \|u^{in}\|_1+C(\varepsilon), \label{u1a}\qquad\text{for all}\; t\in(0,T_{\mathrm{max}}),
\end{equation}and for any $0<s<t<T_{\mathrm{max}}$,
\begin{equation}\label{u1b}
	\int_s^t\int_\Omega  (uf(u)+b_0)\,\rd x\rd \sigma\leq 2b_0(t-s)+2\|u^{in}\|_1+4b_0|\Omega|.
\end{equation}	Besides, there is $C>0$ depending on $\tau$, $f$, $\Omega$ and the initial data such that
\begin{equation}\label{vl1}
	\sup\limits_{t\in(0,T_{\mathrm{max}})}\|v(t)\|_1\leq C.
\end{equation}
\end{proposition}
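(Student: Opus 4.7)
For local existence and uniqueness I would rewrite \eqref{ks1} in non-divergence form as
\[
\partial_t u=\gamma(v)\Delta u+2\gamma'(v)\nabla u\cdot\nabla v+u\bigl(\gamma'(v)\Delta v+\gamma''(v)|\nabla v|^2\bigr)-uf(u),
\]
and couple it with the linear parabolic equation \eqref{ks2}. Since $\gamma$ is smooth and strictly positive on every bounded $v$-interval, the system fits into the standard quasilinear parabolic framework (Amann's contraction-mapping setup, or the iteration scheme already used in earlier works on the same system), producing a unique classical solution in the claimed regularity class on a maximal interval $[0,T_{\max})$. The extensibility criterion \eqref{gec} then follows from a standard bootstrap: if $\|u(t)\|_\infty$ stayed bounded up to $T_{\max}$, parabolic regularity applied to \eqref{ks2} would yield H\"older bounds on $v$ and $\nabla v$, after which Schauder estimates for the quasilinear equation would extend $u$ across $T_{\max}$. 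Non-negativity of $u$ and $v$ is a consequence of the weak maximum principle combined with $u^{in},\tau v^{in}\ge 0$.

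For \eqref{u1a} I would integrate \eqref{ks1} over $\Omega$, use the no-flux boundary condition to kill the boundary term, and obtain the mass identity $\frac{d}{dt}\|u\|_1+\int_\Omega uf(u)\,\rd x=0$. Splitting
\[
\int_\Omega uf(u)\,\rd x=\tfrac12\!\int_\Omega(uf(u)+b_0)\,\rd x+\tfrac12\!\int_\Omega uf(u)\,\rd x-\tfrac{b_0}{2}|\Omega|
\]
and invoking \eqref{f1a} with $a_1=2\varepsilon$ to estimate the second half from below by $\varepsilon\|u\|_1-\tfrac12 b_1|\Omega|$, I would arrive at the linear differential inequality
\[
\frac{d}{dt}\|u\|_1+\varepsilon\|u\|_1+\tfrac12\!\int_\Omega(uf(u)+b_0)\,\rd x\le K(\varepsilon).
\]
Multiplication by $e^{\varepsilon t}$, integration in time, and division by $e^{\varepsilon t}$ then yield \eqref{u1a}. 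Estimate \eqref{u1b} follows by integrating the mass identity directly from $s$ to $t$, using $uf(u)+b_0\ge 0$ from \eqref{f1b}, and invoking \eqref{u1a} (for instance with $\varepsilon=1$) to bound $\|u(s)\|_1$ uniformly in $s$.

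Finally, for \eqref{vl1} I would test \eqref{ks2} with $\mathbf 1$. When $\tau>0$ this gives $\tau\frac{d}{dt}\|v\|_1+\|v\|_1=\|u\|_1$, whose right-hand side is uniformly controlled by \eqref{u1a}; together with $v^{in}\in W^{1,\infty}(\Omega)\subset L^1(\Omega)$, Gronwall's lemma delivers the time-uniform $L^1$ bound. When $\tau=0$ the estimate is immediate because $\|v\|_1=\|u\|_1$ by elliptic testing with $\mathbf 1$. None of these steps presents a genuine obstacle; the only delicate point is the splitting used in \eqref{u1a}, which must simultaneously (i) exploit \eqref{f1a} to manufacture a damping term $\varepsilon\|u\|_1$, and (ii) preserve $uf(u)+b_0\ge 0$ pointwise on the half that is kept inside the time integral, so that the resulting exponentially weighted space-time $L^1$ control on $uf(u)+b_0$ is available for the entropy-type argument used later in the paper.
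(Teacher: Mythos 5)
Your proposal is correct and follows essentially the same route as the paper: Amann's quasilinear parabolic theory for local well-posedness and the extensibility criterion, the mass identity combined with \eqref{f1a} (taking $a_1=2\varepsilon$) and the half-splitting that keeps $uf(u)+b_0\ge 0$ inside the time integral to get \eqref{u1a}, and integration of \eqref{ks2} for \eqref{vl1}. The only cosmetic difference is that you obtain \eqref{u1b} by integrating the raw mass identity and invoking \eqref{u1a} to bound $\|u(s)\|_1$, whereas the paper integrates the specific ODI with $a_1=1$; this yields the same linear-in-$(t-s)$ bound up to the precise constants, which is all that is used later.
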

\begin{proof}Existence and uniqueness of local classical solutions mainly follows from the theory developed by Amann in \cite{Aman1988, Aman1989, Aman1990, Aman1993}, see e.g., \cite{AhnYoon2019,JKW2018,JLZ2022}, while the non-negativity of solutions follows from the comparison principle of heat equations.
	
	An integration of \eqref{ks1} over $\Omega$ yields that
\begin{equation}\label{u1}
	\frac{\rd}{\rd t}\int_\Omega u\,\rd x +\int_\Omega uf(u)\,\rd x=0.
\end{equation}
Choosing $a_1=2\varepsilon$ in \eqref{f1a} with any $\varepsilon>0$, we infer that
\begin{equation*}
	\frac{\rd}{\rd t}\int_\Omega u\,\rd x +\varepsilon\int_\Omega u\,\rd x+\frac12\int_\Omega(uf(u)+b_0)\,\rd x  \leq (b_1(\varepsilon)+b_0)|\Omega|/2.
\end{equation*}In particular, take $a_1=2\varepsilon=1$, then $b_1=b_0$ and  we get 
\begin{equation*}
	\frac{\rd}{\rd t}\int_\Omega u\,\rd x +\frac12\int_\Omega u\,\rd x+\frac12\int_\Omega(uf(u)+b_0)\,\rd x  \leq b_0|\Omega|.
\end{equation*}Then, solving the above two inequality, we obtain \eqref{u1a} with $C(\varepsilon)=\frac{(b_1(\varepsilon)+b_0)|\Omega|}{2\varepsilon}$, and \eqref{u1b}, respectively.

On the other hand, an integration of \eqref{ks2} over $\Omega$ yields that
\begin{equation*}
	\tau\frac{\rd}{\rd t}\int_\Omega v\,\rd x+\int_\Omega v\,\rd x=\int_\Omega u\,\rd x.
\end{equation*}Then \eqref{vl1} follows from the above inequality and the uniform $L^1$-boundedness of $u$.
 This completes the proof.
\end{proof}
Now, we recall the following   result,  see \cite[Proposition~(9.2)]{Aman1983}, \cite[Lemme~3.17]{BeBo1999}, or \cite[Lemma~2.2]{AhnYoon2019}.
\begin{lemma}\label{lm2} 
	Let $g\in  L^1(\Omega)$. For any $1\leq q< \frac{N}{(N-2)_+}$, there exists a positive constant $C_q$ depending only on $\Omega$ and $q$ such that the solution $z \in W^{1,1}(\Omega)$ to
	\begin{equation}\label{helm}
		\begin{cases}
			-\Delta z+ z=g,\qquad x\in\Omega\,,\\
			\nabla z\cdot \mathbf{n}=0\,,\qquad x\in\partial\Omega\,,
		\end{cases}
	\end{equation}
	satisfies
	\begin{equation*}
		\|z\|_q \leq C_q \|g\|_1\,.
	\end{equation*}
\end{lemma}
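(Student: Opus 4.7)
My plan is to run a duality argument against the adjoint Neumann Helmholtz problem, combined with standard $L^p$-elliptic regularity and Sobolev embedding on the smooth bounded domain $\Omega$.

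First I would reduce to smooth data by approximation: pick $g_n \in C^\infty(\bar\Omega)$ with $g_n\to g$ in $L^1(\Omega)$, and let $z_n \in C^\infty(\bar\Omega)$ denote the unique classical solution of \eqref{helm} with right-hand side $g_n$. Once an a priori estimate of the form $\|z_n\|_q \le C_q\|g_n\|_1$, with $C_q$ depending only on $\Omega$ and $q$, is established, passing to the limit yields the claim with $z$ identified as the $L^q$-limit of the $z_n$'s.

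The heart of the argument is the a priori estimate, which I would obtain by duality. For $q \in [1, N/(N-2)_+)$, set $q' = q/(q-1) \in (N/2, \infty]$; the hypothesis on $q$ is precisely the statement $q' > N/2$. Then
\[
\|z_n\|_q = \sup\left\{ \int_\Omega z_n \phi \,\rd x \;:\; \phi \in C^\infty(\bar\Omega),\; \|\phi\|_{q'} \le 1 \right\}.
\]
For each admissible $\phi$, let $\psi \in C^\infty(\bar\Omega)$ solve the adjoint Neumann problem $-\Delta\psi + \psi = \phi$ in $\Omega$, $\nabla \psi \cdot \mathbf{n} = 0$ on $\partial\Omega$. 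Standard $L^{q'}$-regularity for the Neumann Laplacian on $\Omega$ (with the borderline case $q'=\infty$, corresponding to $q=1$, handled directly by the weak maximum principle) gives $\|\psi\|_{W^{2,q'}(\Omega)} \le C\|\phi\|_{q'}$, and the Sobolev embedding $W^{2,q'}(\Omega) \hookrightarrow L^\infty(\Omega)$, which holds precisely because $2q' > N$, then yields $\|\psi\|_\infty \le C\|\phi\|_{q'}$. Two integrations by parts, in which the Neumann conditions on both $z_n$ and $\psi$ eliminate all boundary contributions, produce
\[
\int_\Omega z_n \phi \,\rd x = \int_\Omega (-\Delta z_n + z_n)\psi \,\rd x = \int_\Omega g_n \psi \,\rd x \le \|\psi\|_\infty \|g_n\|_1 \le C\|\phi\|_{q'}\|g_n\|_1.
\]
Taking the supremum over $\phi$ gives the estimate for $z_n$ with a constant that is uniform in $n$, and it passes to the limit as $n\to\infty$.

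The only subtlety I anticipate is the interpretation of \eqref{helm} when $g$ is merely in $L^1(\Omega)$: the solution $z$ should be understood either by transposition (the identity $\int_\Omega z\phi\,\rd x = \int_\Omega g\psi\,\rd x$ for the test pairs $(\phi,\psi)$ constructed above) or, equivalently, as the $L^q$-limit of the smooth approximants $z_n$. Once $z$ is defined in this way, the bound $\|z\|_q \le C_q\|g\|_1$ is automatic; uniqueness in $W^{1,1}(\Omega)$ is not needed for the stated inequality, but can be recovered by a standard renormalization argument applied to the difference of two candidate solutions.
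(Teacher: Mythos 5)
The paper itself does not prove this lemma; it is quoted directly from Amann, B\'enilan--Boccardo and Ahn--Yoon, so there is no in-text argument to compare against. Your duality proof is correct and is essentially the classical route by which such $L^1\to L^q$ bounds for the Neumann Helmholtz operator are obtained: the hypothesis $1\le q<\frac{N}{(N-2)_+}$ is exactly $q'>N/2$, so the Agmon--Douglis--Nirenberg estimate $\|\psi\|_{W^{2,q'}}\le C\|\phi\|_{q'}$ combined with $W^{2,q'}(\Omega)\hookrightarrow L^\infty(\Omega)$ (or, in the borderline case $q=1$, $q'=\infty$, the maximum principle $\|\psi\|_\infty\le\|\phi\|_\infty$) gives the needed $\|\psi\|_\infty\le C\|\phi\|_{q'}$, and the two integrations by parts are legitimate since both $z_n$ and $\psi$ are smooth with homogeneous Neumann data. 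The only point requiring care is the one you flag yourself: your estimate is proved for the transposition solution, i.e.\ the $L^q$-limit of the approximants $z_n$, and to conclude for ``the solution $z\in W^{1,1}(\Omega)$'' of the statement you must also know that the $W^{1,1}$ solution is unique and coincides with that limit; a duality or renormalization argument supplies this, and in the paper's actual applications the datum $g=uf(u)+b_0$ is continuous, so the issue is harmless there. Compared with the truncation-type arguments behind the cited references, your duality argument is shorter and gives a constant controlled purely by elliptic regularity of the adjoint problem; its mild drawback is that it does not by itself produce the companion gradient bounds ($z\in W^{1,q}$ for $q<N/(N-1)$) that truncation methods also deliver, but those are not needed for the present lemma.
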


Finally, we  introduce the following Gronwall-type inequality, which is a variant form of \cite[Lemma 1.3]{PataZelik07}.
\begin{lemma} \label{lemgron}Let $\Lambda:\;[0,\infty)\rightarrow[0,\infty)$ be an absolutely continuous function satisfying
	\begin{equation}\label{gron0}
		\frac{\rd}{\rd t}\Lambda(t)+2\lambda\Lambda(t)\leq \alpha(t)\Lambda(t)+\beta(t),
	\end{equation}
where $\lambda>0$, $\alpha(t)\geq0$ and $\beta(t)\geq0$. Moreover, $\int_s^t\alpha(\sigma)\,\rd \sigma\leq \lambda(t-s)+m$, for all $t\geq s\geq0$ with some constant $m\geq0$, and 
\begin{equation}
	\int_0^t\beta(s)e^{\lambda(s-t)}\,\rd s\leq K_\beta.
\end{equation}
   Then, for all $t\geq0$, there holds
\begin{equation}
	\Lambda(t)\leq \Lambda(0)e^{m-\lambda t}+e^{m}K_\beta.
\end{equation}
\end{lemma}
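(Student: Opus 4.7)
The plan is to handle this by the standard integrating-factor trick, using the hypothesis $\int_s^t \alpha(\sigma)\,\rd\sigma\leq \lambda(t-s)+m$ to convert the $\alpha(t)\Lambda(t)$ contribution into a controlled multiplicative constant $e^m$. The bookkeeping will be clean because the inequality assumes $2\lambda$ on the left and only needs $\lambda$ worth of decay to absorb $\alpha$, leaving a margin of $\lambda$.

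First, I would introduce the integrating factor
\begin{equation*}
\mu(t):=\exp\!\left(2\lambda t-\int_0^t \alpha(\sigma)\,\rd\sigma\right),
\end{equation*}
so that \eqref{gron0} rewrites as $\frac{\rd}{\rd t}\bigl(\mu(t)\Lambda(t)\bigr)\leq \mu(t)\beta(t)$. Integrating from $0$ to $t$ gives
\begin{equation*}
\Lambda(t)\leq \frac{\Lambda(0)}{\mu(t)}+\int_0^t \frac{\mu(s)}{\mu(t)}\beta(s)\,\rd s.
\end{equation*}

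The key estimate is then to bound the ratios $\mu(s)/\mu(t)$ using the hypothesis on $\alpha$. Specifically, for any $0\leq s\leq t$,
\begin{equation*}
\frac{\mu(s)}{\mu(t)}=\exp\!\left(-2\lambda(t-s)+\int_s^t\alpha(\sigma)\,\rd\sigma\right)\leq \exp\bigl(-2\lambda(t-s)+\lambda(t-s)+m\bigr)=e^{m-\lambda(t-s)}.
\end{equation*}
Taking $s=0$ gives $1/\mu(t)\leq e^{m-\lambda t}$ for the first term, while for the integral term I would plug the same pointwise bound in to obtain
\begin{equation*}
\int_0^t \frac{\mu(s)}{\mu(t)}\beta(s)\,\rd s\leq e^m \int_0^t \beta(s)e^{\lambda(s-t)}\,\rd s\leq e^m K_\beta,
\end{equation*}
where the last step is exactly the integral hypothesis on $\beta$. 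Combining these two estimates yields the claimed bound $\Lambda(t)\leq \Lambda(0)e^{m-\lambda t}+e^m K_\beta$.

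I do not expect any serious obstacle here: the proof is a one-line integrating-factor computation plus two applications of the hypotheses. The only subtlety worth flagging is that absolute continuity of $\Lambda$ is what licenses the standard fundamental-theorem-of-calculus step after multiplying by $\mu$, and that the factor of $2\lambda$ (as opposed to $\lambda$) on the left-hand side of \eqref{gron0} is precisely what produces the surplus exponential decay $e^{-\lambda t}$ once $\alpha$ has been absorbed.
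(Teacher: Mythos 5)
Your proof is correct and is essentially identical to the paper's: the same integrating factor $\exp\{2\lambda t-\int_0^t\alpha\,\rd\sigma\}$, the same integration over $[0,t]$, and the same use of the hypothesis on $\alpha$ to bound the exponential ratios by $e^{m-\lambda(t-s)}$ before invoking the integral bound on $\beta$. No gaps.
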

\begin{proof}
	The proof resembles that of the classical Gronwall inequality. We provide the detail here for reader's convenience. First, we deduce from \eqref{gron0} that for all $\sigma>0$,
\begin{equation}
	\frac{\rd}{\rd \sigma}\bigg(\Lambda(\sigma)\exp\,\{2\lambda \sigma-\int_0^{\sigma}\alpha(\theta)\,\rd \theta\}\bigg)	\leq \beta(\sigma)\exp\,\{2\lambda \sigma-\int_0^{\sigma}\alpha(\theta)\,\rd \theta\}.
\end{equation}
It follows from an integration on $[0,t]$ of the above ODI that
\begin{align*}
\Lambda(t)\leq& \Lambda(0)\exp\,\{-2\lambda t +\int_0^{t}\alpha(\theta)\,\rd \theta\}+\int_0^{t}\beta(\sigma)\exp\,\{2\lambda (\sigma-t)+\int_\sigma^{t}\alpha(\theta)\,\rd \theta\}\,\rd \sigma\\
\leq& \Lambda(0)\exp\,\{-\lambda t +m\}+\int_0^{t}\beta(\sigma)\exp\,\{\lambda (\sigma-t)+m\}\,\rd \sigma\\
\leq & \Lambda(0)e^{m-\lambda t}+e^{m}K_\beta.
\end{align*}This completes the proof.	
\end{proof}
\section{Uniform-in-time boundedness of $v$}

In this section, we use an improved comparison argument to derive a uniform-in-time upper bound for $v$. Two auxiliary functions are introduced to achieve the goal. To begin with,
we introduce the operator $\mathcal{A}$ on $L^2(\Omega)$ defined by
\begin{equation}
	\mathrm{dom}(\mathcal{A}) \triangleq \{ z \in H^2(\Omega)\ :\ \nabla z\cdot \nu = 0 \;\text{ on }\; \partial\Omega\}\,, \qquad  \mathcal{A}z \triangleq - \Delta z +  z\,, \quad z\in \mathrm{dom}(\mathcal{A})\,. \label{y2}
\end{equation} 
We recall that $\mathcal{A}$ generates an analytic semi-group on $L^p(\Omega)$ and is invertible on $L^p(\Omega)$ for all $p\in (1,\infty)$. We then set 
\begin{equation}\label{defw}
	w(t) \triangleq \mathcal{A}^{-1}[u(t)]\ge 0\,,  \qquad t\in [0,T_{\mathrm{max}})\,,
\end{equation}
the non-negativity of $w$ being a consequence of that of $u$ and the comparison principle of elliptic equations. Thanks to the time continuity of $u$, 
\begin{equation*}
 w^{in}\triangleq w(0)=\mA^{-1}[u^{in}]\,,
\end{equation*} 
and it follows from the regularity assumption \eqref{ini} on the initial conditions that $w^{in}$ belongs to $W^{2,p}(\Omega)$ for any $p\in(1,\infty)$.

\begin{lemma}\label{wbound} The following key identity holds
\begin{equation}\label{keyid}
	\partial_t w+ue^{-v}+\mathcal{A}^{-1}[uf(u)]=\mathcal{A}^{-1}[ue^{-v}]\,,\qquad \text{for all}\;(t,x)\in(0,T_{\mathrm{max}})\times\Omega\,.
\end{equation} In  addition, there exists a constant $w^*>0$ depending only on $f$, and the initial data, such that for all $(t,x)\in[0,T_{\mathrm{max}})\times\Omega$
\begin{equation}\label{comA}
	w(t,x)\leq w^*.
\end{equation}
\end{lemma}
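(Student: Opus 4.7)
The plan splits into two parts. For the identity \eqref{keyid}, I would rewrite the Laplacian as $\Delta=I-\mathcal{A}$ in \eqref{ks1} (with $\gamma(s)=e^{-s}$), obtaining
\begin{equation*}
\partial_t u + \mathcal{A}[ue^{-v}] = ue^{-v} - uf(u),
\end{equation*}
and then apply $\mathcal{A}^{-1}$ to both sides. Since $\mathcal{A}^{-1}$ commutes with $\partial_t$ and $\mathcal{A}^{-1}\mathcal{A}=I$, this yields \eqref{keyid} directly.

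For the upper bound \eqref{comA}, the strategy is to convert \eqref{keyid} into a pointwise ordinary differential inequality for $w$. The first observation is that $v\geq 0$ on $[0,T_{\mathrm{max}})\times\bar{\Omega}$ by the maximum principle applied to \eqref{ks2} (using $u\geq 0$ and $v^{in}\geq 0$), so $0\leq ue^{-v}\leq u$. The elliptic comparison principle for \eqref{helm} then delivers
\begin{equation*}
\mathcal{A}^{-1}[ue^{-v}]\leq \mathcal{A}^{-1}[u]=w,
\end{equation*}
and substituting this into \eqref{keyid} while dropping the non-negative term $ue^{-v}$ leaves
\begin{equation*}
\partial_t w + \mathcal{A}^{-1}[uf(u)]\leq w.
\end{equation*}

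To extract a pointwise dissipation mechanism for $w$, I would invoke the super-linearity encoded in \eqref{f1a}. Selecting $a_1=2$ gives $uf(u)\geq 2u-b_1$ on $[0,\infty)$, and a further use of elliptic comparison---combined with the fact that $\mathcal{A}^{-1}$ maps constants to themselves---produces $\mathcal{A}^{-1}[uf(u)]\geq 2w-b_1$. Plugging this in yields
\begin{equation*}
\partial_t w + w \leq b_1 \qquad\text{on } (0,T_{\mathrm{max}})\times\Omega.
\end{equation*}
Integrating this scalar ODI pointwise in time gives $w(t,x)\leq w^{in}(x)e^{-t}+b_1(1-e^{-t})\leq \|w^{in}\|_\infty+b_1$. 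Since $u^{in}\in C(\bar{\Omega})$, standard elliptic regularity for \eqref{helm} places $w^{in}=\mathcal{A}^{-1}[u^{in}]$ in $W^{2,p}(\Omega)$ for every $p\in(1,\infty)$, and this embeds into $L^\infty(\Omega)$ because $N\leq 3$. Setting $w^\ast:=\|w^{in}\|_\infty+b_1$ then completes the argument.

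The only mildly delicate ingredient is the non-negativity of $v$, which is however standard for the linear parabolic problem \eqref{ks2} and is already implicit in Proposition~\ref{prop2.1}. Otherwise the proof is driven purely by the comparison principle for \eqref{helm} and the super-linear lower bound on $uf(u)$ from \eqref{f1a}; neither the entropy estimate nor the second auxiliary function $h$ described in the introduction is needed at this stage, and the dimension restriction $N\leq 3$ enters only through the Sobolev embedding used to bound $w^{in}$.
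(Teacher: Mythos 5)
Your proposal is correct and follows essentially the same route as the paper: the identity by applying $\mathcal{A}^{-1}$ to \eqref{ks1}, the bound $\mathcal{A}^{-1}[ue^{-v}]\leq w$ from $e^{-v}\leq1$ and elliptic comparison, the lower bound $\mathcal{A}^{-1}[uf(u)]\geq a_1 w-b_1$ via \eqref{f1a} with $a_1\geq 2$, and a pointwise ODI for $w$. The only cosmetic difference is your explicit constant $w^*=\|w^{in}\|_\infty+b_1$; note that the $L^\infty$-bound on $w^{in}$ does not actually need $N\leq3$, since $w^{in}\in W^{2,p}(\Omega)$ for every $p\in(1,\infty)$ and one may simply take $p>N/2$.
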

\begin{proof}
First, the identity \eqref{keyid} simply follows by taking $\mathcal{A}^{-1}$ on both sides of \eqref{ks1}. Then, since $e^{-v}\leq 1$, an application of comparison principle of elliptic equations gives
\begin{equation*}
	\mathcal{A}^{-1}[ue^{-v}](t,x)\leq \mathcal{A}^{-1}[u](t,x)= w(t,x)\,,\qquad \text{for all}\;(t,x)\in(0,T_{\mathrm{max}})\times\Omega\,.
\end{equation*}
On the other hand, according to \eqref{f2a},
\begin{equation}
	uf(u)\geq a_1u-b_1,\qquad\text{for all}\; (t,x)\in (0,T_{\mathrm{max}})\times\Omega.
\end{equation}
Then by comparison principle of elliptic equations again, we deduce that
\begin{equation*}
	\mathcal{A}^{-1}[uf(u)](t,x)\geq \mathcal{A}^{-1}[a_1u-b_1](t,x)=a_1 w(t,x)-b_1\,,\qquad \text{for all}\;(t,x)\in(0,T_{\mathrm{max}})\times\Omega\,.
\end{equation*}
which by picking $a_1\geq 2$ together with \eqref{keyid}   yields that
\begin{equation}\label{kineq1}
	\partial_t w+ue^{-v}+ w\leq b_1\,,\qquad \text{for all}\;(t,x)\in(0,T_{\mathrm{max}})\times\Omega\,.
\end{equation}
Then we finish the proof by the non-negativity of $ue^{-v}$  and solving the above inequality via standard ODI  technique.
\end{proof}

\begin{lemma}\label{dualest}
There is $C>0$ depending on $f$ and the initial data such that for all $t\in(0,T_{\mathrm{max}})$, 
\begin{equation}\label{dual0}
	\|w\|_{H^1}^2+\int_0^{t}e^{2(s-t)}\int_\Omega u^2e^{-v}\,\rd x\rd s\leq C.
\end{equation}
\end{lemma}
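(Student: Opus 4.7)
The plan is to test the key identity \eqref{keyid} against $\mathcal{A}w = u$ and exploit the self-adjointness of $\mathcal{A}^{-1}$ on $L^2(\Omega)$. Concretely, for any $\varphi\in L^2(\Omega)$ there holds $\int_\Omega (\mathcal{A}^{-1}[\varphi])\,\mathcal{A}w\,\rd x = \int_\Omega \varphi\,w\,\rd x$, which will convert the two nonlocal terms appearing in \eqref{keyid} into local expressions paired with $w$, a function we already control in $L^\infty$ thanks to Lemma \ref{wbound}.

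First I multiply \eqref{keyid} by $u = \mathcal{A}w$ and integrate over $\Omega$. The time-derivative term produces
\begin{equation*}
\int_\Omega u\,\partial_t w\,\rd x = \int_\Omega (\mathcal{A}w)\,\partial_t w\,\rd x = \frac{1}{2}\frac{\rd}{\rd t}\|w\|_{H^1}^2,
\end{equation*}
after an integration by parts permitted by $\nabla w\cdot\mathbf{n}=0$ on $\partial\Omega$. Applying the symmetry identity above to the two nonlocal terms yields
\begin{equation*}
\frac{1}{2}\frac{\rd}{\rd t}\|w\|_{H^1}^2 + \int_\Omega u^2 e^{-v}\,\rd x + \int_\Omega w\,uf(u)\,\rd x = \int_\Omega w\,u\,e^{-v}\,\rd x.
\end{equation*}

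To convert this into a dissipative ODI, I use the algebraic identity $\|w\|_{H^1}^2 = \int_\Omega w\,\mathcal{A}w\,\rd x = \int_\Omega uw\,\rd x$ to add $\|w\|_{H^1}^2$ to both sides of the preceding equation. The right-hand side then reads $\int_\Omega wu(1+e^{-v})\,\rd x \leq 2w^*\|u\|_1$, which is uniformly bounded in $t$ by virtue of Lemma \ref{wbound} and the $L^1$-estimate \eqref{u1a} of Proposition \ref{prop2.1}. For the remaining mixed term, the sign property \eqref{f1b} gives $\int_\Omega w\,uf(u)\,\rd x \geq -b_0\int_\Omega w\,\rd x \geq -b_0 w^*|\Omega|$. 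Collecting these bounds produces the differential inequality
\begin{equation*}
\frac{\rd}{\rd t}\|w\|_{H^1}^2 + 2\|w\|_{H^1}^2 + 2\int_\Omega u^2 e^{-v}\,\rd x \leq C.
\end{equation*}

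The announced estimate \eqref{dual0} then follows by multiplying through by the integrating factor $e^{2t}$, integrating from $0$ to $t$, and dividing by $e^{2t}$; note that $\|w^{in}\|_{H^1}<\infty$ because $u^{in}\in C(\bar\Omega)$ and elliptic regularity for \eqref{helm} places $w^{in}$ in $W^{2,p}(\Omega)$ for every $p\in(1,\infty)$. The only mildly delicate point in this argument is the choice of test function: taking $u=\mathcal{A}w$ rather than $w$ itself is what simultaneously manufactures the genuine dissipation $\int u^2 e^{-v}\,\rd x$ and pairs each remaining nonlocal factor with the bounded function $w$, so that no further structural input beyond Lemma \ref{wbound} and \eqref{f1b} is needed.
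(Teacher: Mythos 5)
Your proof is correct and follows essentially the same route as the paper: both test the evolution of $w$ against $u=\mathcal{A}w$ to generate $\tfrac12\tfrac{\rd}{\rd t}\|w\|_{H^1}^2$ together with the dissipation $\int_\Omega u^2e^{-v}\,\rd x$, and then integrate the resulting ODI. The only difference is cosmetic: the paper multiplies the pointwise inequality \eqref{kineq1} (already obtained in Lemma \ref{wbound}, where choosing $a_1\ge 2$ supplies the $+w$ damping) by $u\ge 0$, whereas you work directly with the identity \eqref{keyid}, use self-adjointness of $\mathcal{A}^{-1}$ together with $0\le w\le w^*$ and \eqref{f1b}, and recreate the damping by adding $\|w\|_{H^1}^2=\int_\Omega uw\,\rd x$; both yield the same differential inequality and the same conclusion.
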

\begin{proof} Recalling that $w-\Delta w=u\geq0$, we multiply \eqref{kineq1} by $u$ and integrate by part to get that
\begin{equation}\label{dual1}
	\frac12\frac{\rd}{\rd t}\int_\Omega(|\nabla w|^2+w^2)\,\rd x+\int_\Omega u^2e^{-v}\,\rd x+\int_\Omega(|\nabla w|^2+w^2)\,\rd x \leq b_1\int_\Omega u\,\rd x\leq C\,,
\end{equation}
which gives rise to the desired estimates through an integration of the above ODI with respect to time.
\end{proof}
\begin{remark}
	Notice that $v=w$ if $\tau=0$. Then same results in Lemma \ref{wbound} and Lemma \ref{dualest} hold   for $v$ when $\tau=0$. Since uniform boundedness of $v$ is already known, one can skip the rest parts of this section  when $\tau=0$.
\end{remark}
\begin{lemma}\label{vh1}
 There is $C>0$ depending on $\tau,$ $f$ and the initial data such that 
\begin{equation}\label{h1}
\sup\limits_{t\in[0,T_{\mathrm{max}})}	\|v\|\leq C.
\end{equation}
\end{lemma}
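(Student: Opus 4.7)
The plan is to test the $v$-equation against $v$ itself and reduce the resulting right-hand side to a quantity controlled by the $H^1$-bound for $w$ already established in Lemma~\ref{dualest}. The crucial algebraic observation is that, since $w\in\operatorname{dom}(\mathcal{A})$, one has $u=\mathcal{A}w=w-\Delta w$ together with $\nabla w\cdot\mathbf n=0$ on $\partial\Omega$; likewise $\nabla v\cdot\mathbf n=0$. So the nonlocal quantity $u$ appearing in the forcing of \eqref{ks2} can be rewritten through $w$ and two integrations by parts produce only boundary-free terms.

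More concretely, I would first multiply \eqref{ks2} by $v$, integrate over $\Omega$, and obtain
\begin{equation*}
\frac{\tau}{2}\frac{\rd}{\rd t}\|v\|^{2}+\|\nabla v\|^{2}+\|v\|^{2}=\int_{\Omega}uv\,\rd x.
\end{equation*}
Then I would substitute $u=w-\Delta w$ and integrate by parts, using both Neumann conditions, to get
\begin{equation*}
\int_{\Omega}uv\,\rd x=\int_{\Omega}wv\,\rd x+\int_{\Omega}\nabla w\cdot\nabla v\,\rd x.
\end{equation*}
Two applications of Young's inequality with small parameter $1/2$ yield
\begin{equation*}
\int_{\Omega}uv\,\rd x\leq \tfrac12\|v\|^{2}+\tfrac12\|\nabla v\|^{2}+\tfrac12\|w\|_{H^{1}}^{2}.
\end{equation*}
Combining with the previous identity gives the ODI
\begin{equation*}
\tau\frac{\rd}{\rd t}\|v\|^{2}+\|\nabla v\|^{2}+\|v\|^{2}\leq \|w\|_{H^{1}}^{2}\leq C,
\end{equation*}
where the last bound is exactly the time-uniform estimate \eqref{dual0} of Lemma~\ref{dualest}. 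A direct integration of this ODI delivers the claimed bound \eqref{h1}, with a constant depending on $\tau$, $f$, $\Omega$ and the initial data via $\|v^{in}\|$ and the constant in Lemma~\ref{dualest}.

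There is no real obstacle: the main lemma \ref{dualest} has already done the delicate work of obtaining an $H^{1}$-bound for the dual variable $w$. The only point that must be checked carefully is that the two integration-by-parts steps produce no boundary contributions, which is guaranteed because $w\in\operatorname{dom}(\mathcal A)$ and $v$ satisfies \eqref{ks3}. Note that in the parabolic-elliptic case $\tau=0$ the lemma is immediate since $v\equiv w$, consistent with the remark preceding the statement.
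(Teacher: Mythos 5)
Your proposal is correct and follows essentially the same route as the paper: test \eqref{ks2} with $v$, rewrite $u=w-\Delta w$, integrate by parts using the Neumann conditions, absorb $\tfrac12\|\nabla v\|^2$, and solve the resulting ODI. The only (harmless) deviation is in handling $\int_\Omega wv\,\rd x$: you use Young's inequality and absorb $\tfrac12\|v\|^2$ into the left-hand side, so that only the $H^1$-bound of $w$ from Lemma~\ref{dualest} is needed, whereas the paper bounds this term by $\|w\|_\infty\|v\|_1$ via \eqref{comA} and \eqref{vl1}; both estimates are valid.
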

\begin{proof}
	From \eqref{ks2},  \eqref{vl1}, \eqref{defw},  \eqref{comA} and  \eqref{dual0}, we infer that
	\begin{align*}
		\frac\tau2\frac{\rd}{\rd t}\|v\|^2+\int_\Omega (|\nabla v|^2+v^2)\,\rd x=&\int_\Omega uv\,\rd x
		=\int_\Omega (w-\Delta w)v\,\rd x\\
		=& \int_\Omega wv\,\rd x+\int_\Omega\nabla w\cdot\nabla v\,\rd x\\
		\leq &\|w\|_\infty\|v\|_1+\frac12\|\nabla w\|^2+\frac12\|\nabla v\|^2\\
		\leq&C+\frac12\|\nabla v\|^2,
	\end{align*}which yields that
	\begin{equation}\label{ener2}
		\frac\tau2\frac{\rd}{\rd t}\|v\|^2+\frac12\int_\Omega (|\nabla v|^2+v^2)\,\rd x\leq C.
	\end{equation}This completes the proof by solving the above inequality.
\end{proof}
\medskip

Now, we introduce the second auxiliary function $h(t,x)$ which is a solution to the following initial-boundary problem:
	\begin{equation}
	\begin{cases}\label{heqn}
		\tau \partial_t h-\Delta h+h=\mathcal{A}^{-1} [uf(u)+b_0],\qquad &(t,x) \in  (0,T_{\mathrm{max}}) \times\Omega,\\
		\nabla h\cdot\mathbf{n}=0,\qquad &(t,x) \in  (0,T_{\mathrm{max}}) \times\partial\Omega,\\
		h(0,x)=0 \qquad & x \in \Omega.
	\end{cases}
\end{equation}According to standard regularity theories of elliptic and parabolic equations, we have $h(t,x)\in C^{1,2}([0,T_{\mathrm{max}})\times\bar{\Omega})$, and moreover, $h(t,x)\geq0$ by maximum principles since $\mathcal{A}^{-1}[uf(u)+b_0]\geq0$ due to \eqref{f1b}. In addition, we have the following estimates.
\begin{lemma}
For all $(t,x)\in(0,T_{\mathrm{max}})\times\Omega$, there holds
	\begin{equation}\label{h1}
	\|h\|_1\leq C
	\end{equation}with $C>0$ depending on $\|u^{in}\|_1$, $\tau$ and $f$.
	
In addition, when $N\leq 3,$ there is  a constant $M>0$ depending only on $f$ and $\Omega$, such that  for any $\lambda_1>0$ 
\begin{align}\label{h2}
	\frac{\tau}{2}\frac{\rd}{\rd t}&\int_\Omega(|\nabla h|^2+h^2)\,\rd x+\lambda_1\int_\Omega(|\nabla h|^2+h^2)\,\rd x+\int_\Omega \left(uf(u)+b_0\right)h \,\rd x+\frac14\|h-\Delta h\|^2\nonumber\\
	&\leq 	M\left(\int_\Omega(u\log u+e^{-1}+1)\,\rd x\right)\left(\int_\Omega (uf(u)+b_0)\,\rd x\right)+C(\lambda_1)\,,\quad\text{for all}\; t\in(0,T_{\mathrm{max}})\,,
\end{align}where $C(\lambda_1)>0$ is a constant depending on $\lambda_1$, $\tau$, $f$ and $\|u^{in}\|_1$.
\end{lemma}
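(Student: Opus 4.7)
To establish \eqref{h1}, I would integrate \eqref{heqn} over $\Omega$. The Neumann boundary condition eliminates the Laplacian term, and since $\mathcal{A}\mathbf{1}=\mathbf{1}$ under homogeneous Neumann boundary conditions we have $\int_\Omega \mathcal{A}^{-1}[uf(u)+b_0]\,\rd x=\int_\Omega(uf(u)+b_0)\,\rd x$. This yields the linear ODE $\tau\frac{\rd}{\rd t}\|h\|_1+\|h\|_1=\int_\Omega(uf(u)+b_0)\,\rd x$, which, via the integrating factor $e^{t/\tau}$ and the time-weighted $L^1$ estimate \eqref{u1a} applied with $\varepsilon=1/\tau$, produces the claimed uniform bound on $\|h\|_1$.

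For \eqref{h2}, the starting point is to multiply \eqref{heqn} by $\mathcal{A}h=h-\Delta h$ and integrate. Using the Neumann boundary condition of $h$ (inherited by $\partial_t h$) and the self-adjoint duality $\int_\Omega \mathcal{A}^{-1}[g]\,\mathcal{A}h\,\rd x=\int_\Omega g\,h\,\rd x$, I would derive the energy identity
\begin{equation*}
\frac{\tau}{2}\frac{\rd}{\rd t}\bigl(\|\nabla h\|^2+\|h\|^2\bigr)+\|\mathcal{A}h\|^2 = \int_\Omega (uf(u)+b_0)\,h\,\rd x.\qquad(\star)
\end{equation*}
Since the target inequality \eqref{h2} places both $\int(uf(u)+b_0)h\,\rd x$ and $\lambda_1\|h\|_{H^1}^2$ on the left-hand side, my strategy is to add these two quantities to both sides of $(\star)$ and then consume $\tfrac{3}{4}\|\mathcal{A}h\|^2$ from the dissipation term to absorb them on the right, leaving $\tfrac{1}{4}\|\mathcal{A}h\|^2$ as a residual on the left.

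The duality and Cauchy--Schwarz inequality control the doubled product as $2\int_\Omega(uf(u)+b_0)h\,\rd x=2\int_\Omega \mathcal{A}^{-1}[uf(u)+b_0]\,\mathcal{A}h\,\rd x \leq \tfrac{1}{2}\|\mathcal{A}h\|^2+2\|\mathcal{A}^{-1}[uf(u)+b_0]\|^2$. Lemma~\ref{lm2} with $q=2$ is available precisely because $2<N/(N-2)_+$ is equivalent to the dimensional restriction $N\leq 3$; it yields $\|\mathcal{A}^{-1}[uf(u)+b_0]\|\leq C\|uf(u)+b_0\|_1$. The sub-logarithmic growth \eqref{f2a} then converts \emph{one} of the two resulting $L^1$-factors into $\int_\Omega(u\log u+e^{-1}+1)\,\rd x$, giving $\|\mathcal{A}^{-1}[uf(u)+b_0]\|^2 \leq M\bigl(\int_\Omega(uf(u)+b_0)\,\rd x\bigr)\bigl(\int_\Omega(u\log u+e^{-1}+1)\,\rd x\bigr)$, which is exactly the product structure appearing on the RHS of \eqref{h2}.

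The remaining term $\lambda_1\|h\|_{H^1}^2$ is controlled by interpolation against \eqref{h1}. I start from $\|h\|_{H^1}^2=\int_\Omega h\,\mathcal{A}h\,\rd x\leq\|h\|\,\|\mathcal{A}h\|$; for $N\leq 3$, combining the Sobolev embedding $H^2\hookrightarrow L^\infty$ with the elliptic regularity $\|h\|_{H^2}\leq C\|\mathcal{A}h\|$ gives $\|h\|^2\leq\|h\|_1\|h\|_\infty\leq C\|h\|_1\|\mathcal{A}h\|$, hence $\|h\|_{H^1}^2\leq C\|h\|_1^{1/2}\|\mathcal{A}h\|^{3/2}$. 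Young's inequality together with the uniform $L^1$-bound from \eqref{h1} then yields $\lambda_1\|h\|_{H^1}^2\leq\tfrac{1}{4}\|\mathcal{A}h\|^2+C(\lambda_1)$. Inserting these estimates into $(\star)$ produces \eqref{h2}. I do not expect obstacles beyond bookkeeping of constants: the three structural ingredients — the self-adjoint duality of $\mathcal{A}^{-1}$, the gain in Lemma~\ref{lm2} (which is precisely what forces $N\leq 3$), and the sub-logarithmic growth bound \eqref{f2a} — slot together naturally.
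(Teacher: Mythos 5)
Your proposal is correct and follows essentially the same route as the paper: integrate \eqref{heqn} for the $L^1$-bound via \eqref{u1a}, test with $\mathcal{A}h$, use the duality $\int_\Omega\mathcal{A}^{-1}[uf(u)+b_0]\,\mathcal{A}h\,\rd x=\int_\Omega(uf(u)+b_0)h\,\rd x$ together with Young's inequality, Lemma~\ref{lm2} with $q=2$ (hence $N\le3$), and \eqref{f2a} to produce the product structure on the right of \eqref{h2}. The only (harmless) deviation is the treatment of $\lambda_1\|h\|_{H^1}^2$: you derive $\lambda_1\|h\|_{H^1}^2\le\tfrac14\|h-\Delta h\|^2+C(\lambda_1)\|h\|_1^2$ by the explicit chain $\|h\|_{H^1}^2\le\|h\|\,\|\mathcal{A}h\|$, $\|h\|^2\le\|h\|_1\|h\|_\infty$ and $H^2\hookrightarrow L^\infty$ plus elliptic regularity, whereas the paper gets the same inequality from an Ehrling-type argument based on the compact embedding $H^2\hookrightarrow H^1$ with lower-order control by $\|h\|_1$.
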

\begin{proof}
On the one hand,	integrating the equation for $h$ over $\Omega$ yields that
	\begin{equation*}
		\frac{\rd}{\rd t}\int_\Omega h\,\rd x+\frac{1}{\tau}\int_\Omega h\,\rd x=\frac{1}{\tau}\int_\Omega (uf(u)+b_0)\,\rd x.
	\end{equation*}
Thus, thanks to \eqref{u1a}, we get
\begin{equation*}
	\|h(t)\|_1=\frac{1}{\tau}\int_0^te^{\frac{1}{\tau}(s-t)}\int_\Omega (uf(u)+b_0)\,\rd x\rd s
	\leq C(\|u^{in}\|_1,\tau).
\end{equation*}
	
 On the other hand,  multiplying \eqref{heqn} by $\mathcal{A}h=h-\Delta h$, we obtain
 \begin{equation*}
 	\frac{\tau}{2}\frac{\rd}{\rd t}\int_\Omega(|\nabla h|^2+h^2)\,\rd x+\|h-\Delta h\|^2=\int_\Omega\mathcal{A}^{-1}[uf(u)+b_0](h-\Delta h)\,\rd x=\int_\Omega \left(uf(u)+b_0\right)h \,\rd x.
 \end{equation*}
 Observing that by Young's inequality,
 \begin{align*}
 	\int_\Omega \left(uf(u)+b_0\right)h \,\rd x	=&\int_\Omega\mathcal{A}^{-1}[uf(u)+b_0](h-\Delta h)\,\rd x\\
 	\leq&\frac14\|h-\Delta h\|^2+\|\mathcal{A}^{-1}[uf(u)+b_0]\|^2,
 \end{align*}
 it then follows  that
 \begin{equation}\label{h2a}
 	\frac{\tau}{2}\frac{\rd}{\rd t}\int_\Omega(|\nabla h|^2+h^2)\,\rd x+\int_\Omega \left(uf(u)+b_0\right)h \,\rd x+\frac12\|h-\Delta h\|^2\leq 	2\|\mathcal{A}^{-1}[uf(u)+b_0]\|^2.
 \end{equation}
 Now we recall that by Lemma \ref{lm2}, for $N=2,3$, it holds
 \begin{align*}
 	\|\mathcal{A}^{-1}[uf(u)+b_0]\|^2\leq& C\left(\int_\Omega\left(uf(u)+b_0\right)\,\rd x\right)^2,
 \end{align*}
 and thanks to \eqref{f2a}
 \begin{equation*}
 	\int_\Omega\left(uf(u)+b_0\right)\,\rd x\leq \int_\Omega (a_2u\log u+b_2+b_0)\,\rd x.
 \end{equation*}
 Thus, since $s\log s\geq -e^{-1}$ for $s\geq0$, there is $M>0$ depending only on $f$ and $\Omega$ such that
 \begin{align*}
 2\|\mathcal{A}^{-1}[uf(u)+b_0]\|^2\leq& C\left(\int_\Omega\left(uf(u)+b_0\right)\,\rd x\right)\left(\int_\Omega (a_2u\log u+b_2+b_0)\,\rd x\right)\\
 \leq& M \left(\int_\Omega\left(uf(u)+b_0\right)\,\rd x\right)\left(\int_\Omega (u\log u+e^{-1}+1)\,\rd x\right).
 \end{align*}

 Finally, we deduce  from the compact embedding $H^2\hookrightarrow H^1$ that for any $\lambda_1,\varepsilon>0$
 \begin{align}\label{h2c}
 	\lambda_1\int_\Omega(|\nabla h|^2+h^2)\,\rd x=\lambda_1\|h\|_{H^1}^2
 	\leq \varepsilon\lambda_1\|h-\Delta h\|^2+\lambda_1 C(\varepsilon)\|h\|_1^2.
 \end{align}Then, \eqref{h2} follows by taking $\lambda_1\varepsilon=1/4$ and adding \eqref{h2c} up with \eqref{h2a}.
	This completes the proof.
\end{proof}
Now we  prove the following key comparison lemma which provides  an upper control of $v$ by the two auxiliary functions.
\begin{lemma}\label{com0} There exists a positive constant $C>0$ depending only on the initial data such that for all $(t,x) \in  [0,T_{\mathrm{max}})\times\bar{\Omega}$, it holds that
\begin{equation}\label{comB}
	v(t,x)\leq w(t,x)+\tau h(t,x)+C.
\end{equation}
\end{lemma}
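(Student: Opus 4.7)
The plan is to apply the parabolic maximum principle to a carefully engineered comparison quantity. Motivated by the source-free argument of \cite{FuJi2021a} (which used a corrector of the form $-\tau e^{-v}$) and by the observation that the new non-local term $\tau\mathcal{A}^{-1}[uf(u)]$ appearing in $\mathcal{L}w$ is, up to the constant $\tau b_0$, precisely the source driving the equation \eqref{heqn} for $\tau h$, I would set
\[
\Phi(t,x) := v(t,x) - w(t,x) - \tau h(t,x) + \tau e^{-v(t,x)} - C,
\]
where $C>0$ is to be chosen depending only on $\tau$, $f$ and the initial data. Since $\tau e^{-v}\geq 0$, establishing $\Phi\leq 0$ on $[0,T_{\mathrm{max}})\times\bar\Omega$ yields \eqref{comB} at once.

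I would then apply $\mathcal{L}=\tau\partial_t+\mathcal{A}$ to each summand. A direct use of \eqref{ks2} gives $\mathcal{L}v=u$; multiplying the key identity \eqref{keyid} by $\tau$ and invoking $\mathcal{A}w=u$ produces
\[
\mathcal{L}w = u - \tau u e^{-v} + \tau\mathcal{A}^{-1}[u e^{-v}] - \tau\mathcal{A}^{-1}[uf(u)];
\]
the definition \eqref{heqn} combined with the identity $\mathcal{A}^{-1}[1]=1$ gives $\mathcal{L}(\tau h)=\tau\mathcal{A}^{-1}[uf(u)]+\tau b_0$; a chain-rule computation using \eqref{ks2} yields
\[
\mathcal{L}(\tau e^{-v}) = \tau e^{-v}(v+1) - \tau u e^{-v} - \tau e^{-v}|\nabla v|^2;
\]
and $\mathcal{L}(C)=C$. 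Summing with the signs indicated in the definition of $\Phi$, the non-local contributions $\pm\tau\mathcal{A}^{-1}[uf(u)]$ cancel (this is the raison d'\^etre of $h$) and the pointwise contributions $\pm \tau u e^{-v}$ cancel (as in the source-free case), leaving
\[
\mathcal{L}\Phi = -\tau\mathcal{A}^{-1}[u e^{-v}] - \tau e^{-v}|\nabla v|^2 + \tau e^{-v}(v+1) - \tau b_0 - C.
\]
Since $v\geq 0$ (from the maximum principle applied to \eqref{ks2}), an elementary calculus check yields $e^{-v}(v+1)\leq 1$; the other terms are non-positive, so any $C\geq \tau$ forces $\mathcal{L}\Phi\leq 0$ pointwise. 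The Neumann condition $\nabla\Phi\cdot\mathbf{n}=0$ is inherited from those of $v$, $w$, $h$, and $e^{-v}$. At $t=0$ one has $h(0)\equiv 0$, $\tau e^{-v^{in}}\leq \tau$, and both $v^{in}$ and $w^{in}=\mathcal{A}^{-1}[u^{in}]$ lie in $L^\infty(\Omega)$ (the latter by elliptic regularity applied to $u^{in}\in C(\bar\Omega)$); thus enlarging $C$ further to $C\geq \|v^{in}\|_\infty+\tau$ guarantees $\Phi(0,\cdot)\leq 0$ on $\bar\Omega$. The parabolic comparison principle for $\mathcal{L}$ with Neumann data then gives $\Phi\leq 0$ throughout $[0,T_{\mathrm{max}})\times\bar\Omega$, and dropping the non-negative corrector $\tau e^{-v}$ produces \eqref{comB}.

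The only nontrivial step is guessing the right linear combination, i.e.\ seeing that the role of $\tau h$ is to absorb the new non-local term $\tau\mathcal{A}^{-1}[uf(u)]$ arising in $\mathcal{L}w$ from the presence of the source, while $\tau e^{-v}$ continues to play the same balancing role as in the source-free analysis of \cite{FuJi2021a}. Once this algebraic design is recognized, the remainder of the proof is a routine verification of the four $\mathcal{L}$-computations above and an application of the standard maximum principle, with no genuine analytic obstacle beyond bookkeeping.
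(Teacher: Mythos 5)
Your proposal is correct and is essentially the paper's own argument: the comparison function $w-\tau e^{-v}+\tau h+C$ (equivalently your $\Phi\le 0$) is exactly the one used there, with the same cancellations of $\pm\tau ue^{-v}$ and $\pm\tau\mathcal{A}^{-1}[uf(u)]$, the same bound $e^{-v}(v+1)\le 1$, the same sign facts $\mathcal{A}^{-1}[ue^{-v}]\ge 0$, $b_0\ge 0$, and the same choice $C\ge \tau+\|v^{in}\|_\infty$ before invoking the Neumann parabolic comparison principle. The only difference is presentational (you compute $\mathcal{L}$ of each summand and sum, while the paper chains the corresponding differential inequalities), so no further comment is needed.
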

\begin{proof} By \eqref{ks2}, \eqref{defw}, \eqref{heqn} and \eqref{keyid}, one infers that	
	\begin{align*}
		\tau v_t-\Delta v+v=&\,u=w-\Delta w\\
		=&\bigg(\tau w_t-\Delta w+w\bigg)-\tau w_t\\
		=&\bigg(\tau w_t-\Delta w+w\bigg)+\tau ue^{-v}+\tau \mathcal{A}^{-1} [uf(u)]-\tau\mathcal{A}^{-1}[ue^{-v}]\\
		\leq&\bigg(\tau w_t-\Delta w+w\bigg)+\tau ue^{-v}+\tau \mathcal{A}^{-1} [uf(u)+b_0]-\tau\mathcal{A}^{-1}[ue^{-v}]\\
		=&\bigg(\tau w_t-\Delta w+w\bigg)+\tau ue^{-v}+\tau \bigg(\tau h_t-\Delta h+h\bigg)-\tau\mathcal{A}^{-1}[ue^{-v}]\\
		\leq&\bigg(\tau w_t-\Delta w+w\bigg)+\tau ue^{-v}+\tau \bigg(\tau h_t-\Delta h+h\bigg),
	\end{align*}where the last inequality holds since $\tau\mathcal{A}^{-1}[ue^{-v}]\geq0$ due to maximum principles.
On the other hand,  we also notice by \eqref{ks2} that
\begin{align*}
	ue^{-v}=&e^{-v}(\tau v_t-\Delta v+v)\\
	=&-\bigg(\tau \partial_te^{-v}-\Delta e^{-v}+e^{-v}\bigg)-e^{-v}|\nabla v|^2+ve^{-v}+e^{-v}\\
	\leq&-\bigg(\tau \partial_te^{-v}-\Delta e^{-v}+e^{-v}\bigg)+1,
\end{align*}	since $e^{-v}|\nabla v|^2\geq0$ and $ve^{-v}+e^{-v}\leq\max_{s\geq0}(1+s)e^{-s}=1$.

Collecting the above inequalities, we obtain that for $(t,x)\in(0,T_{\mathrm{max}})\times \Omega$,
	\begin{align*}
	\tau v_t-\Delta v+v	
		\leq&\tau\partial_t(w-\tau e^{-v}+\tau h)-\Delta(w-\tau e^{-v}+\tau h)+(w-\tau e^{-v}+\tau h)+\tau\\
		\leq&\tau\partial_t(w-\tau e^{-v}+\tau h+C)-\Delta(w-\tau e^{-v}+\tau h+C)+(w-\tau e^{-v}+\tau h+C)\,,
	\end{align*}
where the constant $C\geq\tau$ is chosen such that for all $x\in\Omega,$
	\begin{equation*}
		\tau e^{-v^{in}}+v^{in}\leq \tau+v^{in}\leq w^{in}+C=\mathcal{A}^{-1}[u^{in}]+C.
	\end{equation*} 
Since $\nabla v\cdot\mathbf{n}=\nabla (w-\tau e^{-v}+\tau h+C)\cdot\mathbf{n}=0$ on $\partial\Omega$, we finally deduce by comparison principle of heat equations and the non-negativity of $e^{-v}$ that  for all $(t,x) \in  [0,T_{\mathrm{max}})\times \bar{\Omega},$
\begin{equation*}
	v(t,x)\leq w(t,x)-\tau e^{-v(t,x)}+\tau h(t,x)+C\leq w(t,x)+\tau h(t,x)+C.
\end{equation*}
This completes the proof.
\end{proof}
With the above comparison result and Lemma \ref{wbound} at hand, it suffices to derive an upper bound for the second auxiliary function $h$ in order to get the $L^\infty$-norm of $v$. To this aim, we need to establish the following crucial uniform $L\log L$-estimate for $u$.
\begin{lemma}\label{lmenergy}Assume that $N\leq 3$.
	 There exists a time-independent constant $C>0$ such that for all $t\in[0,T_{\mathrm{max}})$,
	\begin{equation}
		\int_\Omega (u\log u+e^{-1})\,\rd x+\|v\|_{H^1}^2\leq C.
	\end{equation}
\end{lemma}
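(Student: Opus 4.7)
The plan is to couple the classical entropy structure of \eqref{ks} specialized to $\gamma(s)=e^{-s}$ with the $h$-estimate \eqref{h2} and the comparison bound \eqref{comB}, then to close the resulting inequality via the nonlinear Gronwall Lemma~\ref{lemgron}. Concretely, I would introduce the entropy functional
\[
E(u,v) := \int_\Omega u\log u\,\rd x + \tfrac{1}{2}\|v\|_{H^1}^2 - \int_\Omega uv\,\rd x,
\]
and derive the source-modified entropy identity
\[
\frac{\rd}{\rd t}E + D + \tau\|v_t\|^2 = -\int_\Omega(\log u + 1 - v)uf(u)\,\rd x,\qquad D := \int_\Omega ue^{-v}|\nabla\log u - \nabla v|^2\,\rd x \geq 0.
\]
The entire burden is the control of the source-driven right-hand side in a way that dominates $\tilde\Phi := \int_\Omega(u\log u + e^{-1})\,\rd x \geq 0$ from below and produces only tractable error terms.

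For the piece $-\int(\log u+1)uf(u)\,\rd x$, I would invoke the divergence $f(s)\to\infty$ from \eqref{f1}: for any prescribed $A>0$ one obtains $-\int(\log u+1)uf(u) \leq -A\tilde\Phi + C(A)$. For $\int v\cdot uf(u)\,\rd x$, writing it as $\int v(uf(u)+b_0)\,\rd x - b_0\int v\,\rd x$ and using $v\geq 0$, $uf(u)+b_0\geq 0$ together with the comparison \eqref{comB} yields
\[
\int_\Omega v\cdot uf(u)\,\rd x \leq (w^*+C)\beta + \tau\int_\Omega h(uf(u)+b_0)\,\rd x,\qquad \beta := \int_\Omega(uf(u)+b_0)\,\rd x.
\]
Adding $\tau$ times \eqref{h2} to the entropy identity then absorbs the non-local term $\tau\int h(uf(u)+b_0)$ into $\frac{\rd}{\rd t}(\tfrac{\tau^2}{2}\|h\|_{H^1}^2)$ plus genuinely dissipative terms, at the cost of a product $K_3\tilde\Phi\beta$ on the right with $K_3 = \tau M$.

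To produce a dissipation in $\|v\|_{H^1}^2$, I would further add $\xi$ times the identity $\tfrac{1}{2}\frac{\rd}{\rd t}\|v\|^2 = \tfrac{1}{\tau}(-\|v\|_{H^1}^2 + \int uv\,\rd x)$ and estimate the coupling term $\int uv$ via $v\leq w+\tau h+C$ together with \eqref{comA}, \eqref{dual0} and Young's inequality to get $\int uv\leq C_\eta + \eta\|h\|_{H^1}^2$ for any $\eta>0$. Choosing $\eta=\tau^2/4$ and defining
\[
\Lambda := E + \tfrac{\tau^2}{2}\|h\|_{H^1}^2 + \xi\|v\|^2,
\]
one obtains the two-sided control
\[
\tilde\Phi + \tfrac{1}{2}\|v\|_{H^1}^2 + \tfrac{\tau^2}{4}\|h\|_{H^1}^2 - C \leq \Lambda \leq \tilde\Phi + \tfrac{1}{2}\|v\|_{H^1}^2 + \tfrac{\tau^2}{2}\|h\|_{H^1}^2 + C,
\]
the upper bound relying on the already-established $\|v\|^2\leq C$ from Lemma~\ref{vh1}. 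Selecting $A$, $\lambda_1$ and $\xi$ as large enough multiples of $\lambda$, $\lambda\tau$ and $\lambda\tau$ respectively allows the dissipative terms $A\tilde\Phi$, $\tfrac{\tau\lambda_1}{2}\|h\|_{H^1}^2$, $\tfrac{2\xi}{\tau}\|v\|_{H^1}^2$ to absorb $2\lambda$ times the upper bound for $\Lambda$, yielding
\[
\frac{\rd}{\rd t}\Lambda + 2\lambda\Lambda \leq C + K_1\beta + K_3\beta\cdot \Lambda
\]
after replacing $\tilde\Phi\beta$ by $(\Lambda+C)\beta$ via the lower bound.

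The main obstacle is the multiplicative factor $K_3\beta$ in the closing inequality: $\beta(t)$ is not uniformly bounded in time, only controlled in two weakened senses by the cumulative bound \eqref{u1b} and the weighted $L^1$ bound \eqref{u1a}. This is precisely the scenario designed for Lemma~\ref{lemgron}, applied after the harmless shift $\Lambda\mapsto\Lambda+C$ to guarantee non-negativity. With $\alpha = K_3\beta$, \eqref{u1b} yields $\int_s^t\alpha\,\rd\sigma \leq K_3 R(t-s)+K_3C$, which forces $\lambda$ to be chosen strictly larger than $K_3 R$ (always possible since $\lambda$ is a free parameter), while \eqref{u1a} provides $\int_0^t\beta_L(s)e^{\lambda(s-t)}\,\rd s < \infty$. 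The lemma then delivers $\Lambda(t) \leq $ const for all $t\in[0,T_{\mathrm{max}})$, and the lower bound above converts this into the asserted uniform estimates on $\tilde\Phi$ and $\|v\|_{H^1}^2$.
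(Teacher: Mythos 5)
Your proposal is correct and follows essentially the same route as the paper: the entropy identity for $\int_\Omega u(\log u -v)\,\rd x$ plus $\tfrac12\|v\|_{H^1}^2$, the comparison $v\le w+\tau h+C$ to convert $\int_\Omega uf(u)v\,\rd x$ into $\tau\int_\Omega (uf(u)+b_0)h\,\rd x$ plus lower-order terms, cancellation of that nonlocal term by adding $\tau$ times \eqref{h2}, superlinearity of $f$ to generate the $2\lambda\int_\Omega(u\log u+e^{-1})\,\rd x$ dissipation, and closure via Lemma \ref{lemgron} using \eqref{u1a}--\eqref{u1b}. The only cosmetic deviations are that you re-derive the $L^2$-identity for $v$ (the paper invokes \eqref{ener2} from Lemma \ref{vh1}) and bound $\int_\Omega uv\,\rd x$ through the comparison and an $\eta\|h\|_{H^1}^2$ absorption (which implicitly also uses $\|h\|_1\le C$), whereas the paper bounds it directly via $u=w-\Delta w$, $\|w\|_\infty$, $\|\nabla w\|$ and $\|v\|_1$; both are harmless.
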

\begin{proof}
First, we deduce from \eqref{ks} via direct calculations that
\begin{align}\label{ener1}
	\frac{\rd}{\rd t}\int_\Omega u(\log u-v)\,\rd x=&\int_\Omega u_t(\log u-v)\,\rd x+\int_\Omega u_t\,\rd x-\int_\Omega uv_t \,\rd x\nonumber\\
	=&\int_\Omega \Delta(ue^{-v})(\log u-v)\,\rd x-\int_\Omega uf(u)\log u \,\rd x+ \int_\Omega uf(u)v\,\rd x\nonumber\\
	&-\int_\Omega uf(u)\,\rd x-\int_\Omega(\tau v_t-\Delta v+v)v_t\,\rd x\nonumber\\
	=&-\int_\Omega u e^{-v}|\nabla\log u-\nabla v|^2\,\rd x-\int_\Omega uf(u)\log u \,\rd x+ \int_\Omega uf(u)v\,\rd x\nonumber\\
	&-\int_\Omega uf(u)\,\rd x-\tau\|v_t\|^2-\frac{1}{2}\frac{\rd}{\rd t}\int_\Omega(|\nabla v|^2+v^2)\,\rd x.
\end{align}
Recalling that $uf(u)+b_0\geq0$, we deduce by \eqref{comA} and \eqref{comB} that 
\begin{align*}
	\int_\Omega uf(u)v\,\rd x=&\int_\Omega \left(uf(u)+b_0\right)v\,\rd x-b_0\|v\|_1\\
	\leq&\int_\Omega \left(uf(u)+b_0\right)(w+\tau h+C)\,\rd x\\
	\leq &\int_\Omega \left(uf(u)+b_0\right)w\,\rd x+\tau \int_\Omega \left(uf(u)+b_0\right)h \,\rd x+C\int_\Omega \left(uf(u)+b_0\right)\,\rd x\\
	\leq &\tau \int_\Omega \left(uf(u)+b_0\right)h \,\rd x+C\int_\Omega \left(uf(u)+b_0\right)\,\rd x.
\end{align*}
Observe that by \eqref{f1} and the fact $f\in C([0,\infty)$,  for any $\lambda>0$, there is $C(\lambda)\geq0$ depending on $f$ such that $sf(s)\log s\geq 2\lambda s\log s-C(\lambda)$ for all $s\geq0$. Indeed,  for any $\lambda>0$, there is $s_0(\lambda)\geq1$ such that $f(s)\geq 2\lambda$ for all $s\geq s_0$ due to \eqref{f1}. Since $s\log s\geq0$ when $s\geq s_0\geq1$, it follows that $sf(s)\log s\geq 2\lambda s\log s$ for all $s\geq s_0$. Then, we can pick $C(\lambda)=-\min\{0,\min_{s\in[0,s_0]}\{f(s)s\log s-2\lambda s\log s\}\}$.
Thus, we infer that
\begin{align*}
	\int_\Omega uf(u)\log u \,\rd x\geq 2\lambda&\int_\Omega u\log u\,\rd x-C(\lambda)|\Omega|.
\end{align*}
As a result, we obtain from \eqref{ener1} that
\begin{align}\label{ener3}
	\frac{\rd}{\rd t}\int_\Omega\bigg(u\log u+e^{-1}+1+\frac12|\nabla v|^2&+\frac{1}2v^2-uv\bigg)\,\rd x+2\lambda\int_\Omega (u\log u+e^{-1}+1) \,\rd x\nonumber\\
	\leq& \tau \int_\Omega \left(uf(u)+b_0\right)h \,\rd x+C\int_\Omega \left(uf(u)+b_0\right)\,\rd x+C(\lambda).
\end{align}

In view of \eqref{vl1}, \eqref{comA} and \eqref{dual0},
\begin{equation*}
	\int_\Omega uv \,\rd x=\int_\Omega (w-\Delta w)v\,\rd x=\int_\Omega wv\,\rd x+\int_\Omega \nabla w\cdot\nabla v\,\rd x\leq\|w\|_\infty\|v\|_1+\|\nabla w\|^2+\frac14\|\nabla v\|^2
	\leq C_0+\frac14\|\nabla v\|^2\,,
\end{equation*}
and thus
\begin{align}
	\mathcal{G}(t):=&\int_\Omega\bigg(u\log u+e^{-1}+1+\frac12|\nabla v|^2+\frac{1}2 v^2-uv\bigg) \,\rd x+C_0\nonumber\\
	\geq& \int_\Omega \bigg(u\log u+e^{-1}+1+\frac14|\nabla v|^2+\frac{1}2 v^2\bigg)\,\rd x\geq0.
\end{align}

Define
\begin{align}
	\Lambda(t):=&\mathcal{G}(t)+\lambda\tau\|v\|^2+\frac{\tau^2}{2}(\|\nabla h\|^2+\|h\|^2)\nonumber\\
	=&\int_\Omega\bigg(u\log u+e^{-1}+1+\frac12|\nabla v|^2+\frac{1+2\tau\lambda}{2}v^2-uv+\frac
	{\tau^2}{2}(|\nabla h|^2+h^2)\bigg) \,\rd x+C_0.
\end{align}
Invoking \eqref{ener3}, \eqref{ener2}, and \eqref{h2} by taking $\lambda_1=\tau\lambda$, we derive   that
\begin{align*}
	\frac{\rd}{\rd t}\Lambda(t)=&\frac{\rd}{\rd t}\mathcal{G}(t)+\tau\lambda\frac{\rd}{\rd t}\int_\Omega v^2\,\rd x+\frac{\tau^2}{2}\frac{\rd}{\rd t}\int_\Omega(|\nabla h|^2+h^2)\,\rd x\\
	\leq&\tau \int_\Omega \left(uf(u)+b_0\right)h \,\rd x+C\int_\Omega \left(uf(u)+b_0\right)\,\rd x-2\lambda\int_\Omega (u\log u+e^{-1}+1)\,\rd x\\
	&+C\lambda-\lambda\int_\Omega (|\nabla v|^2+v^2)\,\rd x\\
	&+M\tau\left(\int_\Omega(u\log u+e^{-1}+1)\,\rd x\right)\left(\int_\Omega (uf(u)+b_0)\,\rd x\right)+C(\tau,\lambda)\\
	&-\tau^2\lambda\int_\Omega(|\nabla h|^2+h^2)\,\rd x-\tau\int_\Omega \left(uf(u)+b_0\right)h \,\rd x-\frac\tau4\|h-\Delta h\|^2,
\end{align*}
which leads to
\begin{align*}
	\frac{\rd}{\rd t}&\Lambda(t)+2\lambda\Lambda(t)\\
	=&\frac{\rd}{\rd t}\Lambda(t)+2\lambda\int_\Omega (u\log u +e^{-1}+1)\,\rd x+\lambda\int_\Omega|\nabla v|^2\,\rd x-2\lambda\int_\Omega uv \,\rd x\\
	&+\lambda\tau^2\int_\Omega (|\nabla h|^2+h^2)\,\rd x+\lambda(1+2\tau\lambda)\|v\|^2\\
\leq&M\tau\left(\int_\Omega(u\log u+e^{-1}+1)\,\rd x\right)\left(\int_\Omega (uf(u)+b_0)\,\rd x\right)+	C\int_\Omega \left(uf(u)+b_0\right)\,\rd x+C(\tau,\lambda)\\
\leq& M\tau\Lambda(t)\left(\int_\Omega (uf(u)+b_0)\,\rd x\right)+C\left(\int_\Omega (uf(u)+b_0)\,\rd x\right)+C(\tau,\lambda),
\end{align*}where we use the facts that $\sup_{t\in[0,T_{\mathrm{max}})}\|v\|\leq C$, and $\int_\Omega \left(u\log u+e^{-1}+1\right)\rd x\leq \mathcal{G}(t)\leq \Lambda(t).$

Now, we recall that by  \eqref{u1b} and take $\lambda=2M\tau b_0$,
\begin{equation}\label{u1b0}
	M\tau\int_s^t\int_\Omega  (uf(u)+b_0)\,\rd x\rd \sigma\leq 2M\tau b_0(t-s)+M\tau\big(2\|u^{in}\|_1+4b_0|\Omega|\big)=\lambda(t-s)+2M\tau\|u^{in}\|_1+4b_0M\tau|\Omega|\,,
\end{equation}and according to \eqref{u1a} with $\varepsilon=\lambda$,
\begin{equation*}
\int_0^te^{\lambda(s-t)}\int_\Omega (uf(u)+b_0)\,\rd x\rd s\leq C(\lambda).
\end{equation*}
Finally, thanks to Lemma \ref{lemgron}, we conclude that
\begin{equation*}
\sup_{t\in[0,T_{\mathrm{max}})}	\Lambda(t)\leq C
\end{equation*}with $C>0$ independent of time. This completes the proof.
\end{proof}
\begin{lemma}\label{hvbound}
	Assume that $N\leq 3$. Then, there is a time-independent constant $C>0$ such that for all $t\in[0,T_{\mathrm{max}})$,
	\begin{equation}
		\|h\|_\infty+\|v\|_\infty\leq C.
	\end{equation}
\end{lemma}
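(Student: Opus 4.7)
The plan is to reduce the claim to an $L^\infty$-bound for the auxiliary function $h$ alone. Indeed, by combining Lemma \ref{com0} with the uniform upper bound $w \leq w^*$ from Lemma \ref{wbound}, we have
\[
 0 \le v(t,x) \le w(t,x) + \tau h(t,x) + C \le w^* + \tau h(t,x) + C,
\]
so once $\|h(t)\|_\infty$ is controlled uniformly in $t$, the same follows for $\|v(t)\|_\infty$. The whole task therefore concentrates on the linear equation \eqref{heqn} satisfied by $h$.

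First I would extract a uniform $L^1$-bound on the forcing. By the entropy estimate of Lemma \ref{lmenergy} we have $\|u\log u\|_1 \leq C$, and the sub-logarithmic growth assumption \eqref{f2a}, i.e.\ $sf(s)\leq a_2 s\log s+b_2$, together with $s\log s\geq -e^{-1}$, then yields
\[
 \sup_{t\in[0,T_{\max})} \|uf(u)+b_0\|_1 \le C.
\]
Next I invoke the elliptic regularization of Lemma \ref{lm2}: for any $1\leq q<N/(N-2)_+$ (so in particular $q=2$ is admissible both when $N=2$ and $N=3$), this gives
\[
 \sup_{t\in[0,T_{\max})} \bigl\|\mathcal{A}^{-1}[uf(u)+b_0](t)\bigr\|_q \le C_q \sup_{t} \|uf(u)+b_0\|_1 \le C.
\]
The key point is that for $N\leq 3$ one can choose such a $q$ satisfying $q>N/2$, which is exactly the threshold needed for the $L^q\to L^\infty$ heat-semigroup smoothing estimate to be integrable in time at the origin.

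With these two ingredients in hand, I would write the Duhamel formula for $h$. Since $h(0)=0$, the equation \eqref{heqn} gives
\[
 h(t) = \frac{1}{\tau}\int_0^t e^{-(t-s)\mathcal{A}/\tau}\, \mathcal{A}^{-1}[uf(u)+b_0](s)\, ds.
\]
Because $\mathcal{A}=-\Delta+I$ has spectrum contained in $[1,\infty)$, the analytic semigroup generated by $\mathcal{A}/\tau$ satisfies the standard smoothing estimate
\[
 \bigl\|e^{-r\mathcal{A}/\tau} g\bigr\|_\infty \le C\, r^{-N/(2q)} e^{-r/\tau} \|g\|_q
\]
for $r>0$. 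Applying this pointwise in time and using the $L^q$-bound on $\mathcal{A}^{-1}[uf(u)+b_0]$, I obtain
\[
 \|h(t)\|_\infty \le \frac{C}{\tau}\int_0^t (t-s)^{-N/(2q)} e^{-(t-s)/\tau}\, ds \le C\int_0^\infty r^{-N/(2q)} e^{-r/\tau}\, dr,
\]
and the last integral converges since $N/(2q)<1$. Combined with Lemma \ref{com0} this gives $\|v\|_\infty \le w^* + \tau\|h\|_\infty + C$, finishing the proof.

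The only mildly delicate point I anticipate is the verification of the $L^q\to L^\infty$ smoothing estimate in the form needed above; one could alternatively bypass it by bootstrapping through parabolic maximal regularity applied to \eqref{heqn} in $L^q$-spaces, using $W^{2,q}\hookrightarrow L^\infty$ for $q>N/2$, but either route leads to the same conclusion thanks to the dimensional restriction $N\leq 3$, which is precisely what makes the $L^1$-control on $uf(u)$ inherited from the sub-logarithmic growth \eqref{f2} sufficient.
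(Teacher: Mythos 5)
Your proposal is correct and follows essentially the same route as the paper: the uniform $L^1$-bound on $uf(u)+b_0$ from the entropy estimate and \eqref{f2a}, the $L^q$-bound on $\mathcal{A}^{-1}[uf(u)+b_0]$ via Lemma \ref{lm2} with $q>N/2$ (possible since $N\leq 3$), the Duhamel representation of $h$ with the $L^q$--$L^\infty$ semigroup smoothing, and finally the comparison $v\leq w+\tau h+C$ together with $w\leq w^*$. The only cosmetic difference is your explicit choice $q=2$ and your remark on an alternative via maximal regularity, which the paper does not need.
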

\begin{proof}
	According to \eqref{f1b} and Lemma \ref{lmenergy},
	\begin{equation}\label{fL1}
		\int_\Omega (uf(u)+b_0)\,\rd x\leq \int_\Omega \left(a_2(u\log u+e^{-1})+b_2\right)\,\rd x\leq C,
	\end{equation} we infer from Lemma \ref{lm2} that for $N\leq 3$ and all $t\in[0,T_{\mathrm{max}})$
	\begin{equation}
		\|\mathcal{A}^{-1}[uf(u)+b_0]\|_q\leq C(q)\int_\Omega (uf(u)+b_0)\,\rd x\leq C
	\end{equation}with some $q>N/2$. Recall that by theory of semi-groups,
	\begin{equation*}
		h(t,x)=\int_0^te^{\frac{1}{\tau}\mathcal{A}(t-s)}\mathcal{A}^{-1}[uf(u)+b_0](s)\,\rd s,
	\end{equation*}
which together standard $L^p-L^q$ estimates leads us to
	\begin{align*}
		\|h\|_\infty\leq \int_0^te^{\frac{1}{\tau}(s-t)}(t-s)^{-\frac{N}{2q}}	\|\mathcal{A}^{-1}[uf(u)+b_0](s)\|_q\,\rd s\leq C\,,
	\end{align*}since for any $\theta\in(0,1)$,
	\begin{equation*}
		\mathcal{I}_\theta:=\int_0^\infty s^{-\theta}e^{-s/\tau}\,\rd s<\infty\,.
	\end{equation*}
	This completes the proof in view of  Lemma \ref{com0} and Lemma \ref{wbound}.
\end{proof}

\section{Uniform-in-time boundedness of $u$ in 2D.}
In this section, we assume $N=2$ and we derive the $L^\infty$-norm of $u$, which entails the existence of classical solutions as well as their uniform-in-time boundedness.

	Since now $\sup_{t\in[0,T_{\mathrm{max}})}(\|v\|_\infty+\|w\|_{H^1})\leq C$, an integration of \eqref{dual1} implies that
\begin{lemma}\label{cor1}
For any $0<s<t<T_{\mathrm{max}}$, there are $M_1, M_2>0$ depending only on $f$, $\Omega$ and the initial data such that
	\begin{equation}\label{dual3}
		\int_s^t\int_\Omega u^2\,\rd x\rd \sigma\leq M_1(t-s)+M_2\,.
	\end{equation}
\end{lemma}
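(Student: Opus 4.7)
The plan is to integrate the differential inequality \eqref{dual1} over the interval $[s,t]$ and exploit the fact that, thanks to Lemma~\ref{hvbound}, the weight $e^{-v}$ is now uniformly bounded below by a strictly positive constant. This is what upgrades the weighted $L^2$-in-space estimate already recorded in Lemma~\ref{dualest} into an honest $L^2$-in-space-time estimate on $u$ with linear-in-time growth.

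More precisely, I would first invoke Lemma~\ref{hvbound} to fix a constant $c_0 := e^{-\|v\|_{L^\infty((0,T_{\mathrm{max}})\times\Omega)}} > 0$, so that
\begin{equation*}
\int_\Omega u^2 e^{-v}\,\rd x \geq c_0 \int_\Omega u^2\,\rd x
\quad\text{for all } t\in(0,T_{\mathrm{max}}).
\end{equation*}
Then I would integrate \eqref{dual1} in time from $s$ to $t$, drop the non-negative term $\int_s^t\|w(\sigma)\|_{H^1}^2\,\rd\sigma$ and the non-negative quantity $\tfrac12\|w(t)\|_{H^1}^2$ on the left, and bound the right-hand side using the uniform $L^1$-estimate on $u$ from Proposition~\ref{prop2.1} to obtain
\begin{equation*}
c_0 \int_s^t \int_\Omega u^2 \,\rd x\,\rd\sigma
\;\leq\; \tfrac12\|w(s)\|_{H^1}^2 + C(t-s),
\end{equation*}
with $C$ depending only on $f$, $\Omega$ and the initial data.

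Finally, the term $\tfrac12\|w(s)\|_{H^1}^2$ is controlled uniformly in $s\in[0,T_{\mathrm{max}})$ by Lemma~\ref{dualest}. Setting $M_1 := C/c_0$ and $M_2 := \sup_{s\in[0,T_{\mathrm{max}})}\|w(s)\|_{H^1}^2/(2c_0)$ then yields the claimed inequality. There is no real obstacle here: all the hard work has already been done in Section~3, and the only new ingredient is the lower bound on $e^{-v}$ made available by the $L^\infty$-bound on $v$.
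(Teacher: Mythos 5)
Your proposal is correct and matches the paper's argument exactly: the paper states the lemma with the one-line remark that ``Since now $\sup_{t\in[0,T_{\mathrm{max}})}(\|v\|_\infty+\|w\|_{H^1})\leq C$, an integration of \eqref{dual1} implies that...'', which is precisely the integration of \eqref{dual1} over $(s,t)$ combined with the lower bound $e^{-v}\geq e^{-\|v\|_\infty}>0$ from Lemma~\ref{hvbound} and the uniform $H^1$-bound on $w$ from Lemma~\ref{dualest} that you spell out.
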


\begin{lemma}\label{leml2u}
	Assume $N=2$. There is $C>0$ independent of time such that for all $t\in[0,T_{\mathrm{max}})$,
	\begin{equation*}
		\|u(t)\|+\|\nabla v\|_4\leq C.
	\end{equation*}
\end{lemma}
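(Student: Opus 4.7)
The plan is to test \eqref{ks1} with $u$ to derive a differential inequality of the form $\tfrac{\rd}{\rd t}\|u\|^2+4\lambda\|u\|^2\leq\alpha(t)\|u\|^2+\beta(t)$, and then close it through the Gronwall-type Lemma~\ref{lemgron}. Since $v\in L^\infty$ by Lemma~\ref{hvbound}, the coefficient $e^{-v}$ is bounded above and below by positive constants; hence a standard integration by parts combined with Young's inequality yields
\begin{equation*}
\frac{\rd}{\rd t}\|u\|^2+c_0\|\nabla u\|^2+2\int_\Omega u^2 f(u)\,\rd x\leq \int_\Omega u^2|\nabla v|^2\,\rd x
\end{equation*}
with some $c_0>0$. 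Arguing as in the proof of Lemma~\ref{lmenergy}, the super-linearity assumption \eqref{f1} furnishes the one-sided estimate $\int_\Omega u^2 f(u)\,\rd x\geq 2\lambda\|u\|^2-C(\lambda)$ for any prescribed $\lambda>0$, which will be chosen sufficiently large in the final step.

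To handle the coupling term $\int_\Omega u^2|\nabla v|^2\,\rd x$, I would apply H\"older together with the 2D Gagliardo--Nirenberg inequality $\|u\|_4^2\leq C\|\nabla u\|\,\|u\|+C\|u\|^2$, obtaining
\begin{equation*}
\int_\Omega u^2|\nabla v|^2\,\rd x\leq \|u\|_4^2\|\nabla v\|_4^2\leq \frac{c_0}{2}\|\nabla u\|^2+C\|u\|^2\bigl(\|\nabla v\|_4^4+1\bigr).
\end{equation*}
Combining this with the previous dissipation inequality produces
\begin{equation*}
\frac{\rd}{\rd t}\|u\|^2+4\lambda\|u\|^2\leq C\bigl(\|\nabla v\|_4^4+1\bigr)\|u\|^2+C(\lambda),
\end{equation*}
which precisely matches the hypotheses of Lemma~\ref{lemgron} with $\alpha(t)=C(\|\nabla v\|_4^4+1)$ and $\beta(t)=C(\lambda)$.

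The main obstacle is therefore to verify the integrability condition $\int_s^t\alpha(\sigma)\,\rd\sigma\leq\lambda(t-s)+m$, which reduces to showing $\int_s^t\|\nabla v\|_4^4\,\rd\sigma\leq K(t-s)+K$. For this I plan to exploit the 2D Gagliardo--Nirenberg estimate $\|\nabla v\|_4^4\leq C\|D^2 v\|^2\|\nabla v\|^2$ together with the uniform bound $\|\nabla v\|\leq C$ from Lemma~\ref{lmenergy}, so that the task reduces to $\int_s^t\|D^2 v\|^2\,\rd\sigma\leq C(t-s)+C$. Testing \eqref{ks2} with $-\Delta v$ yields $\tfrac{\tau}{2}\tfrac{\rd}{\rd t}\|\nabla v\|^2+\tfrac{1}{2}\|\Delta v\|^2+\|\nabla v\|^2\leq\tfrac{1}{2}\|u\|^2$; an integration over $(s,t)$ together with the dual estimate from Lemma~\ref{cor1} and standard elliptic $H^2$-regularity then supplies the desired integrated bound. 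Choosing $\lambda$ large enough --- which is legitimate precisely because of the super-linearity of $f$ --- Lemma~\ref{lemgron} delivers the uniform bound $\|u(t)\|\leq C$. The pointwise $L^4$-bound on $\nabla v$ finally follows from the Duhamel representation for \eqref{ks2} combined with the smoothing estimate $\|\nabla e^{-\sigma\mathcal{A}/\tau}\|_{L^2\to L^4}\leq C\sigma^{-3/4}e^{-\sigma/\tau}$, whose singular kernel is integrable at $\sigma=0$ in dimension two.
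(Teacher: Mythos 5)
Your argument is correct, but it takes a genuinely different route from the paper. Both proofs test \eqref{ks1} with $u$, both feed the resulting ODI into the Gronwall-type Lemma~\ref{lemgron}, and both ultimately rest on the space-time bound $\int_s^t\int_\Omega u^2\,\rd x\rd\sigma\leq M_1(t-s)+M_2$ of Lemma~\ref{cor1}. The paper, however, couples the two quantities into a single functional $\int_\Omega\big(u^2+\tfrac{\tau}{4}|\nabla v|^4\big)\rd x$: it derives an evolution inequality for $\int_\Omega|\nabla v|^4\rd x$ (which requires the identity $\nabla v\cdot\nabla\Delta v=\tfrac12\Delta|\nabla v|^2-|\nabla^2v|^2$, the boundary-curvature estimate $\partial_{\mathbf n}|\nabla v|^2\leq\kappa_1|\nabla v|^2$, and interpolation on $\|u\|_3$ and $\||\nabla v|^2\|_3$), absorbs via compact embedding with $\lambda_2=C_1M_1$, and so obtains $\|u\|$ and $\|\nabla v\|_4$ simultaneously, with $\alpha(t)=C_1\|u\|^2$ as the Gronwall coefficient. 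You instead decouple: you close the $\|u\|^2$ inequality alone with $\alpha(t)=C(\|\nabla v\|_4^4+1)$, verify its averaged integrability through a separate space-time $H^2$-bound on $v$ (testing \eqref{ks2} with $-\Delta v$ plus Lemma~\ref{cor1} and elliptic regularity), take the absorption rate from the superlinearity of $f$ rather than from interpolation, and only afterwards recover the pointwise $\|\nabla v\|_4$ bound by semigroup smoothing. Your route avoids the boundary-trace/curvature step entirely and is arguably more elementary at that point, at the price of invoking $L^p$--$L^q$ smoothing at the end; it also works because the constant $K$ in $\int_s^t\alpha\,\rd\sigma\leq K(t-s)+K$ is independent of the freely chosen $\lambda$, so no circularity arises. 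Two cosmetic remarks: the Gagliardo--Nirenberg inequality should carry the lower-order term, $\|\nabla v\|_4^4\leq C\|D^2v\|^2\|\nabla v\|^2+C\|\nabla v\|^4$, harmless here since $\|\nabla v\|\leq C$ by Lemma~\ref{lmenergy}; and for $\tau=0$ the Duhamel step degenerates, so there one should instead use elliptic regularity ($\|\nabla v\|_4\leq C\|v\|_{H^2}\leq C\|u\|$), exactly as the paper does in its remark following the lemma.
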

\begin{proof} The proof can be carried out in a similar way as in \cite[Lemma 3.3]{BBTW2015} using an extended interpolation inequality attributed to Biler et al. \cite{Biler1994}. Here, we choose an alternative manner relying on Lemma \ref{cor1}.

First,	using that $\nabla v\cdot\nabla \Delta v=\frac12\Delta|\nabla v|^2-|\nabla^2 v|^2$, we see from \eqref{ks2} that
\begin{align*}
	\frac\tau4\int_\Omega |\nabla v|^4\,\rd x=&\int_\Omega|\nabla v|^2\nabla v\cdot\nabla (\Delta v-v+u)\,\rd x\\
	=&-\frac12\int_\Omega |\nabla |\nabla v|^2|^2\,\rd x+\frac12\int_{\partial\Omega} |\nabla v|^2\frac{\partial|\nabla v|^2}{\partial\mathbf{n}}\rd s-\int_\Omega|\nabla v|^2|\nabla^2 v|^2\,\rd x\\
	&-\int_\Omega |\nabla v|^4\,\rd x-\int_\Omega u|\nabla v|^2\Delta v\,\rd x-\int_\Omega u\nabla v\cdot\nabla |\nabla v|^2\,\rd x\,.
\end{align*}
Here since $\frac{\partial|\nabla v|^2}{\partial\mathbf{n}}\leq \kappa_1|\nabla v|^2$ on $\partial\Omega$ with some $\kappa_1>0$ depending on $\Omega$ \cite{MizoSoup2014}, by the compactness of the embedding $H^1(\Omega)\hookrightarrow L^2(\partial\Omega)$, we deduce that
\begin{equation*}
\frac12\int_{\partial\Omega} |\nabla v|^2\frac{\partial|\nabla v|^2}{\partial\mathbf{n}}\rd s\leq \frac14\int_\Omega |\nabla|\nabla v|^2|^2 \,\rd x+C\bigg(\int_\Omega |\nabla v|^2\,\rd x\bigg)^2.
\end{equation*}
Moreover, by Young's inequality
\begin{align*}
	-\int_\Omega u|\nabla v|^2\Delta v\,\rd x-\int_\Omega u\nabla v\cdot\nabla |\nabla v|^2\,\rd x\leq \frac18\int_\Omega |\nabla|\nabla v|^2|^2\,\rd x+\int_\Omega |\nabla v|^2|\nabla^2 v|^2\,\rd x+C\int_\Omega u^2|\nabla v|^2\,.
\end{align*}
On the other hand, from \eqref{ks1}, we infer that
	\begin{align*}
		\frac{\rd}{\rd t}\int_\Omega u^2\,\rd x=&-2\int_\Omega e^{-v}|\nabla u|^2\,\rd x-2\int_\Omega u^2f(u)\,\rd x-2\int_\Omega e^{-v}u\nabla u\cdot\nabla v\,\rd x\\
		\leq&-\int_\Omega  e^{-v}|\nabla u|^2\,\rd x-2\int_\Omega u^2f(u)\,\rd x+\int_\Omega e^{-v}u^2|\nabla v|^2\,\rd x\\
		\leq &-\int_\Omega  e^{-v}|\nabla u|^2\,\rd x-2\int_\Omega u^2f(u)\,\rd x+\int_\Omega u^2|\nabla v|^2\,\rd x\,,
	\end{align*}which together with the fact $\sup_{t\in[0,T_{\mathrm{max}})}(\|u\|_1+\|v\|_\infty)\leq C$ entails that there is $\kappa_0>0$ such that
\begin{equation}\label{ul2a}
	\frac{\rd}{\rd t}\int_\Omega u^2\,\rd x+2\kappa_0\int_\Omega |\nabla u|^2\,\rd x+2\int_\Omega u(uf(u)+b_0)\,\rd x\leq \int_\Omega u^2|\nabla v|^2 \,\rd x+C\,.
\end{equation}	
	
Observe that by the Gagliardo-Nirenberg inequality,
\begin{equation*}
	\|u\|_3^3\leq C\|\nabla u\|\|u\|^2+C\|u\|^3,
\end{equation*}and 
\begin{equation*}
	\||\nabla v|^2\|^3_3\leq C\|\nabla|\nabla v|^2\|^2\||\nabla v|^2\|_1+C\||\nabla v|^2\|_1^3.
\end{equation*}
Recall that $\sup_{t\in[0,T_{\mathrm{max}})}\|\nabla v\|\leq C$. We deduce that from above and Young's inequality that
\begin{align*}
	\int_\Omega u^2|\nabla v|^2\,\rd x\leq& \|u\|_3^2\||\nabla v|^2\|_3\\
	\leq&C\|u\|_3^2\bigg(\int_\Omega|\nabla|\nabla v|^2|^2\,\rd x\bigg)^{\frac13}+C\|u\|_3^2\\
	\leq &\varepsilon\int_\Omega \int_\Omega|\nabla|\nabla v|^2|^2\,\rd x+C_\varepsilon\|u\|_3^3+C\\
	\leq & \varepsilon\int_\Omega \int_\Omega|\nabla|\nabla v|^2|^2\,\rd x+C_\varepsilon\|\nabla u\|\|u\|^2+C_\varepsilon\|u\|^3+C\\
	\leq&\varepsilon\int_\Omega \int_\Omega|\nabla|\nabla v|^2|^2\,\rd x+\varepsilon\|\nabla u\|^2+C_{\varepsilon}\|u\|^4+C_{\varepsilon}
\end{align*}with any $\varepsilon>0.$

Now, collecting the above estimates and taking $\varepsilon$ sufficiently small, we arrive at
\begin{equation*}
	\frac{\rd}{\rd t}\int_\Omega (u^2+\frac{\tau}{4}|\nabla v|^4)\,\rd x+\kappa_0\|\nabla u\|^2+\frac18\int_\Omega|\nabla|\nabla v|^2|^2\,\rd x\leq C_1\|u\|^4+C.
\end{equation*}
Since $H^1(\Omega)\hookrightarrow L^2(\Omega)$ is compact, we deduce that with any $\lambda_2>0$, 
\begin{equation*}
	\frac{\rd}{\rd t}\int_\Omega (u^2+\frac{\tau}{4}|\nabla v|^4)\,\rd x+2\lambda_2\int_\Omega (u^2+\frac{\tau}{4}|\nabla v|^4)\,\rd x\leq C_1\|u\|^2\int_\Omega (u^2+\frac{\tau}{4}|\nabla v|^4)\,\rd x+C(\lambda_2).
\end{equation*}
Finally,  taking $\lambda_2=C_1M_1$, with $M_1$ defined in \eqref{dual3}, and applying Lemma \ref{lemgron}, we conclude that 
\begin{equation*}
	\sup_{t\in[0,T_{\mathrm{max}})}(\|u\|+\|\nabla v\|_4)\leq C
\end{equation*}with $C>0$ independent of time. This completes the proof.
\end{proof}
\begin{remark}
Thanks to  the following	two-dimensional Gagliardo-Nirenberg inequalities:
\begin{equation*}
	\|u\|_4^2\leq C\|\nabla u\|\|u\|+C\|u\|^2\,,
\end{equation*}and
\begin{equation}
	\|\nabla v\|_4^2\leq C\|v\|_{H^2}\|v\|_\infty+C\|v\|_\infty^2\,,
\end{equation}
when $\tau=0$, we notice that $\|v\|_{H^2}\leq C\|u\|$ and hence
	\begin{align*}
		\int_\Omega u^2|\nabla v|^2\,\rd x\leq& \|u\|_4^2\|\nabla v\|_4^2\\
		\leq&C\big(\|\nabla u\|\|u\|+\|u\|^2\big)\big(\|v\|_{H^2}\|v\|_\infty+\|v\|_\infty\big)\\
		\leq&C\big(\|\nabla u\|\|u\|+\|u\|^2\big)(\|u\|+1)\\
		\leq&C\big(\|\nabla u\|\|u\|^2+\|\nabla u\|\|u\|+\|u\|^3+\|u\|^2\big)\\
		\leq&\frac{\kappa_0}{2}\|\nabla u\|^2+C(\kappa_0)(\|u\|^4+1).
	\end{align*}
Then, we may derive the uniform $L^2$-norm from \eqref{ul2a} in the same manner.	
\end{remark}

\textbf{Proof of Theorem \ref{Th1} in 2D.} Based  on Lemma \ref{leml2u}, we can further prove by Moser-Alikakos iteration that $\sup_{t\in[0,T_{\mathrm{max}})}\|u\|_\infty<C$ with some $C>0$ independent of time (see, e.g., \cite[Lemma 12]{FuJi2021a}, \cite[Lemma A.1]{TW2012}), which together with Proposition \ref{prop2.1} proves the global existence of uniform bounded classical solutions to problem \eqref{ks} in the two-dimensional setting.


\section{Boundedness of classical solutions in 3D}
In this section, we consider the case $N=3$. Uniform bounds of $u$ will be established by an application of semi-group theory. We remark that at the current stage, pure energy method seems inadequate to achieve the same goal with a minimal requirement on $\gamma$ or $f$ in higher dimensions \cite{FuJi2021b,LyWa2023}.

\subsection{H\"older continuity of $v$.}
First, we show the H\"older continuity of $w$. For simplicity, we denote $\varphi=\mathcal{A}^{-1}[ue^{-v}]$ and $\psi=\mathcal{A}^{-1}[uf(u)+b_0]$. We observe from  the key identity \eqref{keyid} and \eqref{defw} that $w$ solves the initial-boundary value problem
\begin{subequations}\label{cpv}
	\begin{align}
		&\partial_t w + ue^{-v}  = \varphi-\psi+b_0\,, \quad &(t,x)&\in (0,T_{\mathrm{max}})\times\Omega\,, \label{cpv1} \\
		&- \Delta w +  w  = u \,, \quad &(t,x)&\in (0,T_{\mathrm{max}})\times\Omega\,, \label{cpv1.5} \\
		&\nabla w \cdot \mathbf{n}  = 0\,, \quad &(t,x)&\in (0,T_{\mathrm{max}})\times\partial\Omega\,, \label{cpv2} \\
		&w(0)   = w^{in}\,, \quad &x&\in\Omega\,. \label{cpv3}
	\end{align}
\end{subequations}
We now fix $T\in(0,T_{\mathrm{max}})$ and set $J_T \triangleq [0,T]$. By Lemma~\ref{wbound}, and Lemma~\ref{hvbound}, there are positive constants $v^*$, $w^*$ and $\gamma_*$ such that
\begin{align}
	0  \le v(t,x) & \le v^*\;\;\text{ in }\;\; J_T\times \bar{\Omega}\,, \label{bv}\\
	0  \le w(t,x) & \le w^*\;\;\text{ in }\;\; J_T\times \bar{\Omega}\,, \label{bw}\\
	0<\gamma_*=e^{-v^*}&\leq e^{-v}\leq 1\;\;\text{ in }\;\; J_T\times \bar{\Omega}\,\label{bg},
\end{align}
and we infer from \eqref{defw}, \eqref{bw}, and the elliptic comparison principle that
\begin{equation}
0\leq\varphi=\mathcal{A}^{-1}[ue^{-v}]\leq\mathcal{A}^{-1}[u]=w\leq w^*.\label{phib}
\end{equation}
Besides, we recall that since $\sup_{t\in[0,T]}\|uf(u)+b_0\|_1\leq C$ due to \eqref{fL1}, by Lemma \ref{lm2} with $q=2$, it holds that
\begin{equation*}
	\sup_{t\in[0,T]}\|\psi\|\leq C\|uf(u)+b_0\|_1\leq C.
\end{equation*}
After the above preparation, we now proceed along the lines of \cite[Chapter~V, Section~7]{LSU1968} and \cite[Section~5]{Aman1989} to derive a local energy bound for $w$. Note that the same local energy estimate holds for $v$ when $\tau=0$.

\begin{lemma}\label{lem.impreg1}  Let $\delta\in (0,1)$. There is $C>0$ independent of $T$ such that, if $\vartheta\in C^\infty(J_T\times\bar{\Omega})$, $0\le \vartheta\le 1$, $\sigma\in\{-1,1\}$, and $\rho\in\mathbb{R}$ are such that
	\begin{equation}
		\sigma w(t,x) - \rho \le \delta\,, \qquad (t,x)\in \mathrm{supp}\,\vartheta\,, \label{smallc1}
	\end{equation}
	then
	\begin{align*}
		& \int_\Omega \vartheta^2 (\sigma w(t) - \rho)_+^2\ \mathrm{d}x + \frac{\gamma_*}{2} \int_{t_0}^t \int_\Omega \vartheta^2 |\nabla (\sigma w(s) -\rho)_+|^2\ \mathrm{d}x\mathrm{d}s \\
		& \qquad  \le \int_\Omega \vartheta^2 (\sigma w(t_0) - \rho)_+^2\ \mathrm{d}x + C \int_{t_0}^t \int_\Omega  \left( |\nabla\vartheta|^2 + \vartheta|\partial_t\vartheta| \right) (\sigma w(s) - \rho)_+^2\ \mathrm{d}x\mathrm{d}s \\
		& \qquad\quad + C \int_{t_0}^t  \bigg(\int_{A_{\rho,\sigma}(s)} \vartheta\ \mathrm{d}x\bigg)^{1/2} \mathrm{d}s 
	\end{align*}
	for $0\le t_0\le t\le T$, where 
	\begin{equation*}
		A_{\rho,\sigma}(s)\triangleq \left\{ x\in \Omega\ :\ \sigma w(s,x) > \rho \right\}\,, \qquad s\in [0,T]\,.
	\end{equation*}
\end{lemma}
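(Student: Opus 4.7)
My plan follows the Ladyzhenskaya--Solonnikov--Ural'tseva framework for local energy (Caccioppoli-type) estimates, adapted to the peculiar structure of \eqref{cpv1}--\eqref{cpv1.5}: the temporal equation \eqref{cpv1} is \emph{algebraic} in $\partial_t w$ (no spatial derivatives of $w$ appear), with gradient information only accessible via the auxiliary elliptic relation $-\Delta w + w = u$. Set $\zeta_+ \triangleq (\sigma w - \rho)_+$. On $\{\zeta_+ > 0\}$ one has $\sigma w = \zeta_+ + \rho$ and, by the chain rule, $\sigma\zeta_+ \partial_t w = \tfrac{1}{2}\partial_t \zeta_+^2$ and $\nabla w \cdot \nabla \zeta_+ = \sigma |\nabla \zeta_+|^2$. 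The strategy is to derive one identity from each equation of the system and to couple them through the pointwise bound $u \leq \gamma_*^{-1} u e^{-v}$ coming from \eqref{bg}.

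\textbf{Time identity.} Testing \eqref{cpv1} against $\sigma \vartheta^2 \zeta_+$ and integrating in space yields
\[
\tfrac{1}{2}\tfrac{\rd}{\rd t}\int_\Omega \vartheta^2 \zeta_+^2 \,\rd x - \int_\Omega \vartheta \partial_t\vartheta\,\zeta_+^2 \,\rd x + \sigma \int_\Omega \vartheta^2 \zeta_+ u e^{-v}\,\rd x = \sigma \int_\Omega \vartheta^2 \zeta_+ (\varphi - \psi + b_0)\,\rd x.
\]
For $\sigma = +1$ the third term on the left has the favourable sign and will be removed later by coupling; for $\sigma = -1$ it has the wrong sign and must be controlled from above.

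\textbf{Elliptic identity.} Testing \eqref{cpv1.5} against $\vartheta^2 \zeta_+$, integrating by parts (the boundary term vanishing by \eqref{cpv2}), exploiting $\zeta_+ w = \sigma(\zeta_+^2 + \rho\zeta_+)$ on $\{\zeta_+ > 0\}$, multiplying through by $\sigma$, and absorbing the $\nabla \vartheta$ cross term via Young's inequality produces the pointwise-in-time bound
\[
\tfrac{1}{2}\int_\Omega \vartheta^2 |\nabla \zeta_+|^2 \,\rd x + \int_\Omega \vartheta^2 \zeta_+^2 \,\rd x + \rho \int_\Omega \vartheta^2 \zeta_+ \,\rd x \leq \sigma \int_\Omega \vartheta^2 \zeta_+ u \,\rd x + 2 \int_\Omega \zeta_+^2 |\nabla \vartheta|^2 \,\rd x.
\]
Crucially this is obtained \emph{without} differentiating the factor $e^{-v}$, so no $|\nabla v|^2$ contributions arise (which we could not control uniformly in 3D from $\|\nabla v\|_2 \leq C$ alone).

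\textbf{Coupling and closing.} For $\sigma = +1$, I use $u \leq \gamma_*^{-1} u e^{-v}$ on the right of the elliptic inequality; multiplying it through by $\gamma_*$ and adding to the time identity makes the two $\int \vartheta^2 \zeta_+ u e^{-v}$ contributions cancel and leaves $\tfrac{\gamma_*}{2}\int \vartheta^2|\nabla\zeta_+|^2$ on the left. For $\sigma = -1$, I move $\sigma \int \vartheta^2 \zeta_+ u$ onto the left of the elliptic inequality to obtain an upper bound on $\int \vartheta^2 \zeta_+ u$; since $e^{-v}\leq 1$ this upper-bounds the problematic $\int \vartheta^2 \zeta_+ u e^{-v}$ appearing (after sign flip) on the right of the time identity, and the $-\tfrac{1}{2}\int \vartheta^2|\nabla\zeta_+|^2$ hidden in that bound, once transferred across, becomes the coercive gradient term. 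The source and $|\rho|$ terms are then handled using $\zeta_+\leq\delta$ on $\mathrm{supp}\,\vartheta$ together with $\|\varphi\|_\infty\leq w^*$ (from \eqref{phib}), $\|\psi\|_2\leq C$ (Lemma~\ref{lm2} applied to \eqref{fL1}), $\vartheta^2\leq\vartheta$, and Cauchy--Schwarz with the elementary $\int_{A_{\rho,\sigma}}\vartheta \leq |\Omega|^{1/2}(\int_{A_{\rho,\sigma}}\vartheta)^{1/2}$, delivering the source-type contribution $C(\int_{A_{\rho,\sigma}}\vartheta)^{1/2}$; note that the hypothesis $\zeta_+\leq\delta<1$ also restricts $|\rho|$ to a constant depending only on $w^*$ and $\delta$ whenever $A_{\rho,\sigma}\cap\mathrm{supp}\,\vartheta\neq\emptyset$. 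A time integration over $[t_0,t]$ and multiplication by $2$ then yield the claim. The main obstacle is the sign bookkeeping in the $\sigma=-1$ case, and the key technical point is never to integrate by parts against $e^{-v}$, which the decoupling into a temporal identity and an elliptic identity makes possible.
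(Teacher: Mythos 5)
Your proposal is correct and follows essentially the same route as the paper: test \eqref{cpv1} with $\sigma\vartheta^2(\sigma w-\rho)_+$, convert the $ue^{-v}$ term via $u=w-\Delta w$ (using $e^{-v}\ge\gamma_*$ for $\sigma=+1$ and $e^{-v}\le 1$ together with the $L^\infty$-bound on $w$ for $\sigma=-1$) to extract the coercive gradient term, and control $\varphi$, $\psi$, $b_0$ through $\zeta_+\le\delta$, $\|\psi\|\le C$ and Cauchy--Schwarz to produce the $\big(\int_{A_{\rho,\sigma}}\vartheta\big)^{1/2}$ contribution. Writing the elliptic relation as a separately tested inequality and coupling it back (with the $|\rho|\le C(w^*,\delta)$ observation replacing the paper's direct use of $w\ge0$ resp.\ $w\le w^*$) is only an organizational variant of the same computation.
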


\begin{proof}
	By \eqref{cpv},
	\begin{align}
		\frac{1}{2} \frac{\rd}{\rd t} \int_\Omega \vartheta^2 ( \sigma w - \rho)_+^2\ \mathrm{d}x &  =  \sigma \int_\Omega \vartheta^2 (\sigma w - \rho)_+ \partial_t w \ \mathrm{d}x + \int_\Omega  (\sigma w - \rho)_+^2 \vartheta \partial_t\vartheta\ \mathrm{d}x \nonumber \\
		& = - \sigma \int_\Omega \vartheta^2 (\sigma w - \rho)_+ u e^{-v} \,\rd x + \sigma \int_\Omega \vartheta^2 (\sigma w - \rho)_+ (\varphi-\psi+b_0)\ \mathrm{d}x \label{Z1}\\
		& \qquad + \int_\Omega (\sigma w - \rho)_+^2 \vartheta \partial_t\vartheta \mathrm{d} x\,. \nonumber
	\end{align}
	Either $\sigma=1$ and it follows from  \eqref{cpv1.5}, \eqref{bg}, and the non-negativity of $u$ and $w$ that
	\begin{align*}
		- \sigma \int_\Omega \vartheta^2 (\sigma w - \rho)_+ u e^{-v} \,\rd x & \le - \gamma_*\int_\Omega \vartheta^2 (w - \rho)_+ u\ \mathrm{d}x \\
		& = - \gamma_* \int_\Omega \vartheta^2 (w - \rho)_+ ( w - \Delta w)\ \mathrm{d}x \\
		& \le - \gamma_* \int_\Omega \nabla \left[ \vartheta^2 (w - \rho)_+ \right]\cdot \nabla w\ \mathrm{d}x \\
		& \le - \gamma_* \int_\Omega \vartheta^2 |\nabla (w - \rho)_+ |^2\ \mathrm{d}x \\
		& \qquad + 2 \gamma_* \int_\Omega \vartheta |\nabla\vartheta| (w-\rho)_+ |\nabla (w-\rho)_+|\ \mathrm{d}x\,.
	\end{align*}
	Or $\sigma=-1$, we infer from  \eqref{cpv1.5}, \eqref{bg} and \eqref{bw} that
	\begin{align*}
		- \sigma \int_\Omega \vartheta^2 (\sigma w - \rho)_+ u e^{-v} \,\rd x & \le   \int_\Omega \vartheta^2 (-w - \rho)_+ u\ \mathrm{d}x \\
		& =  \int_\Omega \vartheta^2 (-w - \rho)_+ ( w - \Delta w)\ \mathrm{d}x \\
		& \le  w^* \int_\Omega \vartheta^2 (-w - \rho)_+\ \mathrm{d}x \\	
		& \qquad + \int_\Omega \nabla \left[ \vartheta^2 (-w - \rho)_+ \right]\cdot \nabla w\ \mathrm{d}x \\
		& \le -  \int_\Omega \vartheta^2 |\nabla(-w-\rho)_+|^2\ \mathrm{d}x \\
		& \qquad + 2  \int_\Omega \vartheta |\nabla\vartheta| (-w-\rho)_+ |\nabla (-w-\rho)_+|\ \mathrm{d}x \\
		& \qquad + w^* \int_\Omega \vartheta^2 (-w - \rho)_+ \ \mathrm{d}x\,.
	\end{align*}
	Since $\gamma_*\leq 1$, we have thus shown  that
	\begin{align*}
		- \sigma \int_\Omega \vartheta^2 (\sigma w - \rho)_+ u e^{-v} \,\rd x & \le - \gamma_* \int_\Omega \vartheta^2 |\nabla(\sigma w-\rho)_+|^2\ \mathrm{d}x \\
		& \qquad + 2 \int_\Omega \vartheta |\nabla\vartheta| (\sigma w-\rho)_+ |\nabla (\sigma w-\rho)_+|\ \mathrm{d}x \\
		& \qquad + w^* \int_\Omega \vartheta^2 (\sigma w - \rho)_+\ \mathrm{d}x\,.
	\end{align*}
	Inserting this estimate in \eqref{Z1} and using \eqref{phib} and Young's inequality lead us to
	\begin{align*}
		\frac{1}{2} \frac{\rd}{\rd t} \int_\Omega \vartheta^2 ( \sigma w - \rho)_+^2\ \mathrm{d}x & \le - \gamma_* \int_\Omega \vartheta^2 |\nabla(\sigma w-\rho)_+|^2\ \mathrm{d}x \\
		& \qquad + \frac{\gamma_*}{2} \int_\Omega \vartheta^2 |\nabla(\sigma w-\rho)_+|^2\ \mathrm{d}x +  C \int_\Omega |\nabla\vartheta|^2 (\sigma w-\rho)_+^2\ \mathrm{d}x \\
		& \qquad + (2w^*+b_0) \int_\Omega \vartheta^2 (\sigma w - \rho)_+\ \mathrm{d}x +   \int_\Omega \vartheta^2 (\sigma w -\rho)_+|\psi|\ \mathrm{d}x \\
		& \qquad + \int_\Omega (\sigma w - \rho)_+^2 \vartheta |\partial_t\vartheta| \mathrm{d} x \\
		& \le -\frac{\gamma_*}{2} \int_\Omega \vartheta^2 |\nabla(\sigma w-\rho)_+|^2\ \mathrm{d}x \\
		& \qquad +  C \int_\Omega \left( |\nabla\vartheta|^2 + \vartheta |\partial_t\vartheta| \right) (\sigma w-\rho)_+^2\ \mathrm{d}x\\
		&\qquad +\int_\Omega \vartheta^2 (\sigma w - \rho)_+|\psi|\ \mathrm{d}x + (2w^*+b_0) \int_\Omega \vartheta^2 (\sigma w - \rho)_+\ \mathrm{d}x\,.
	\end{align*}
Using \eqref{smallc1} and the fact $0\leq \vartheta\leq 1$, we notice that
	\begin{align*}
		\int_\Omega \vartheta^2 (\sigma w -\rho)_+|\psi|\ \mathrm{d}x \leq&\bigg(\int_\Omega \vartheta^4 |(\sigma w - \rho)_+|^2\ \mathrm{d}x \bigg)^{\frac12}\|\psi\|\\
		\leq& \delta\sup_{t\in[0,T]}\|\psi\|\bigg(\int_{A_{\rho,\sigma}} \vartheta\,\rd x\bigg)^{\frac12}\\
		\leq& C\bigg(\int_{A_{\rho,\sigma}} \vartheta\,\rd x\bigg)^{\frac12}
	\end{align*}
and 
	\begin{equation*}
		 \int_\Omega \vartheta^2 (\sigma w - \rho)_+\ \mathrm{d}x \leq|\Omega|^{\frac12}\bigg(\int_\Omega \vartheta^4 |(\sigma w - \rho)_+|^2\,\rd x\bigg)^{\frac12} \le |\Omega|^{\frac12}\delta \bigg(\int_{A_{\rho,\sigma}} \vartheta\,\rd x\bigg)^{\frac12}\le C\bigg(\int_{A_{\rho,\sigma}} \vartheta\,\rd x\bigg)^{\frac12}\,
	\end{equation*}
	and integrate the above differential inequality over $(t_0,t)$ to complete the proof.
\end{proof}

We are now in a position to apply \cite[Chapter~II, Theorem~8.2]{LSU1968} to obtain a uniform-in-time H\"older estimate for $w$. In this connection, we emphasize that the validity of the local energy estimates derived in Lemma~\ref{lem.impreg1} does not require the test function $\vartheta$ to be compactly supported in $(0,T)\times \Omega$ and thus include also information on the behavior of $w$ on $(0,T)\times \partial\Omega$.

\begin{corollary}\label{regularw}
	There is $\alpha\in (0,1)$ depending on $\Omega$, $f$,  $\tau$,  and the initial data such that $w\in BUC^{\alpha}(J_T,C^{\alpha}(\bar{\Omega}))$.
\end{corollary}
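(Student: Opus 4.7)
The plan is to invoke the classical De Giorgi theory for parabolic equations as formulated in Ladyzhenskaya--Solonnikov--Ural'tseva \cite[Ch.~II, Thm.~8.2]{LSU1968}, which asserts that any bounded function belonging to the parabolic De Giorgi class $\mathcal{B}_2$ is H\"older continuous in both space and time, with exponent depending only on the structural parameters. Accordingly, the task reduces to verifying that $w$ lies in $\mathcal{B}_2(J_T \times \bar\Omega)$ with parameters that are independent of $T$.

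First, Lemma~\ref{wbound} gives the two-sided bound $0 \le w \le w^*$, so the smallness hypothesis \eqref{smallc1} can be arranged on sufficiently small parabolic cylinders $Q_r(t_0,x_0)$ by choosing $\rho$ near $\mathrm{ess\,sup}_{Q_r}\,\sigma w$; on such a cylinder, one takes $\vartheta$ to be a standard smooth cutoff adapted to $Q_r$. Lemma~\ref{lem.impreg1} then supplies the Caccioppoli-type inequality required for membership in $\mathcal{B}_2$ for both truncations $\sigma=\pm 1$. The extra term
\[
\int_{t_0}^t \bigg(\int_{A_{\rho,\sigma}(s)} \vartheta\,\mathrm{d}x\bigg)^{1/2}\mathrm{d}s
\]
is a lower-order inhomogeneity of precisely the form allowed in \cite[Ch.~II, \S 7]{LSU1968}: it is dominated by $(t-t_0)\,|\Omega|^{1/2}$ and therefore absorbed into the class parameters, since all prefactors in Lemma~\ref{lem.impreg1} are controlled by $\gamma_*$, $w^*$, $b_0$, and $\sup_{[0,T]}\|\psi\|$, each bounded independently of $T$ thanks to Lemma~\ref{hvbound} and \eqref{fL1}.

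A crucial point is that Lemma~\ref{lem.impreg1} does \emph{not} require $\vartheta$ to be compactly supported in $\Omega$; the estimate therefore propagates to cylinders centered on $\partial\Omega$, and $w$ lies in fact in the boundary De Giorgi class, so the resulting H\"older modulus is uniform up to the boundary. The iteration on nested cylinders yields the standard geometric oscillation decay
\[
\mathrm{osc}_{Q_{r/\theta}} w \le \kappa\,\mathrm{osc}_{Q_r} w + C r^{\beta},
\]
for some $\kappa \in (0,1)$, $\theta>1$, $\beta>0$, from which H\"older continuity with some exponent $\alpha\in(0,1)$ follows by a Campanato-type characterization.

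The main obstacle, beyond bookkeeping, is to ensure that the constants arising in the iteration depend only on $\Omega$, $f$, $\tau$, and the initial data, and not on $T$. This is the reason we systematically tracked time-independent bounds for $v$, $w$, and $\psi$ in Lemmas~\ref{wbound}--\ref{hvbound} and in \eqref{fL1}. Once this is verified, the H\"older seminorm of $w$ on $J_T\times\bar\Omega$ is bounded uniformly in $T$, which yields the desired membership $w\in BUC^\alpha(J_T, C^\alpha(\bar\Omega))$.
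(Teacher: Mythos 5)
Your argument follows the paper's proof essentially verbatim: both verify, via the $T$-independent local (Caccioppoli-type) estimates of Lemma~\ref{lem.impreg1} — valid for both truncations $\sigma=\pm1$ and for test functions not compactly supported in $\Omega$, hence up to the boundary — that $w$ belongs to the LSU parabolic De Giorgi class $\hat{\mathcal{B}}_2$, and then invoke \cite[Ch.~II, Thm.~8.2]{LSU1968} to conclude a time-uniform H\"older bound. The only ingredient the paper records that you omit is the H\"older continuity of the initial datum $w^{in}=\mathcal{A}^{-1}[u^{in}]$ (from elliptic regularity and \eqref{ini}), which is needed when applying \cite[Ch.~II, Lemma~8.1 \& Thm.~8.2]{LSU1968} to get the estimate up to $t=0$; this is a minor, easily repaired omission rather than a flaw in the approach.
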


\begin{proof}
	Let $\delta\in (0,1)$. It follows from Lemma~\ref{lem.impreg1} that the estimate \cite[Chapter~II, Equation~(7.5)]{LSU1968} holds true with parameters $r=\frac73, q=\frac{14}{3}, \kappa=\frac16$ satisfying \cite[Chapter~II, equation~(7.3)]{LSU1968}). Consequently, according to \cite[Chapter~II, Remark~7.2 and Remark~8.1]{LSU1968}, there is $C>0$ such that 
	\begin{equation*}
		w\in \hat{\mathcal{B}}_2(J_T\times\bar{\Omega},w^*, C,\frac73,\delta,\frac16)\,,
	\end{equation*} 
	the class of functions $\hat{\mathcal{B}}_2$ being defined in \cite[Chapter~II, Section~8]{LSU1968}. Taking also into account the smoothness of the boundary of $\Omega$ and the H\"older continuity of $w^{in}\in C^{\alpha_0}(\bar{\Omega})$ for some $\alpha_0\in (0,1)$, which stems from the definition $w^{in}=\mA^{-1}[u^{in}]$, the regularity  of $u^{in}$, and elliptic regularity, we then infer from \cite[Chapter~II, Lemma~8.1 \& Theorem~8.2]{LSU1968} that $w\in C^{\alpha}(J_T\times \bar{\Omega})$ for some $\alpha\in (0,1)$, with $\alpha$ depending on other parameters as indicated in the statement of Corollary~\ref{regularw}. Moreover, since the local energy estimate is time-independent, we conclude that the H\"older estimate of $w$ is uniform-in-time as well.
\end{proof}
In the same vein, if $\tau=0$, we obtain a uniform H\"older estimate for $v$. When $\tau>0$, we need to use the standard regularity theory of parabolic equations to obtain a H\"older estimate for $v$ as in \cite[Lemma~3.1]{FuSe2022a}. 

\begin{proposition}\label{prop.impreg2}
	The function $v$ belongs to $BUC^{1+\alpha}(J_T, C^{\alpha}(\bar{\Omega}))$ with the exponent  $\alpha$ given in Corollary~\ref{regularw}.
\end{proposition}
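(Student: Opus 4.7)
The plan is to transfer the uniform H\"older regularity of $w$ established in Corollary~\ref{regularw} onto $v$ through the Duhamel representation of the $v$-equation. Applying the variation-of-constants formula to \eqref{ks2}, one has
\begin{equation*}
v(t) = e^{-t\mathcal{A}/\tau} v^{in} + \frac{1}{\tau}\int_0^t e^{-(t-s)\mathcal{A}/\tau} u(s)\,\rd s,
\end{equation*}
where $(e^{-t\mathcal{A}/\tau})_{t\geq 0}$ is the analytic semigroup generated by $-\mathcal{A}/\tau$ on $L^p(\Omega)$. Substituting $u = \mathcal{A}w$ from \eqref{defw} and commuting $\mathcal{A}$ with the semigroup rewrites this as
\begin{equation*}
v(t) = e^{-t\mathcal{A}/\tau}v^{in} + \frac{1}{\tau}\int_0^t \mathcal{A}e^{-(t-s)\mathcal{A}/\tau} w(s)\,\rd s.
\end{equation*}

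To tame the singular kernel $\mathcal{A}e^{-(t-s)\mathcal{A}/\tau}$, I rely on the identity $\tau^{-1}\int_0^t \mathcal{A}e^{-(t-s)\mathcal{A}/\tau}\,\rd s = I - e^{-t\mathcal{A}/\tau}$ to split off the ``static'' contribution of $w(t)$, producing the decomposition
\begin{equation*}
v(t) = w(t) + e^{-t\mathcal{A}/\tau}\bigl[v^{in} - w(t)\bigr] + \frac{1}{\tau}\int_0^t \mathcal{A}e^{-(t-s)\mathcal{A}/\tau}\bigl[w(s) - w(t)\bigr]\,\rd s.
\end{equation*}
The first term inherits the claimed H\"older regularity directly from Corollary~\ref{regularw}. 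The second is infinitely smooth in space for $t>0$ by the analyticity of the semigroup, and its H\"older dependence on $t$ follows by combining the strong continuity of $(e^{-t\mathcal{A}/\tau})$ with the H\"older continuity of $t\mapsto w(t)$ already secured. For the convolution, I would invoke the bound $\|w(s) - w(t)\|_{L^p(\Omega)} \le C|s-t|^\alpha$ from Corollary~\ref{regularw} together with the classical analytic-semigroup estimate $\|\mathcal{A}e^{-\sigma\mathcal{A}/\tau}\|_{L^p\to L^p} \le C\sigma^{-1}$, so that the integrand is controlled by $C(t-s)^{\alpha-1}$, which is integrable near $s=t$; standard estimates for convolutions of analytic semigroups with H\"older-continuous arguments then deliver the required parabolic H\"older bound.

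The main obstacle is precisely the $(t-s)^{-1}$ singularity of the kernel $\mathcal{A}e^{-(t-s)\mathcal{A}/\tau}$, which would preclude a na\"ive $L^p$-estimate: note that direct parabolic $L^p$-theory applied to $\tau v_t-\Delta v + v = u$ is not available here either, since the only $L^p$-information on $u$ available at this stage is $u \in L^2(J_T\times\Omega)$ coming from Lemma~\ref{dualest} and the lower bound on $e^{-v}$ from Lemma~\ref{hvbound}, which falls short of the threshold $p > (N+2)/2 = 5/2$ required for H\"older regularity in three dimensions. Subtracting the endpoint value $w(t)$ inside the integrand, as above, converts the non-integrable singularity into the integrable rate $(t-s)^{\alpha-1}$ by the quantitative H\"older continuity of $w$. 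Assembling the three contributions and invoking the initial-data regularity \eqref{ini}, I then conclude that $v$ belongs to the asserted class with the same H\"older exponent $\alpha$ as $w$, completing the proof plan.
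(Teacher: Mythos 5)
Your decomposition is a correct identity, and your substitution $u=\mathcal{A}w$ inside the Duhamel formula is in fact equivalent to the paper's device: the paper sets $r:=\mathcal{A}^{-1}[v]$, observes that $\tau\partial_t r-\Delta r+r=w$ with $r(0)=\mathcal{A}^{-1}[v^{in}]$ smooth, invokes standard parabolic (Schauder-type) regularity together with Corollary~\ref{regularw} to get $r\in BUC^{1+\alpha}(J_T,C^{2+\alpha}(\bar{\Omega}))$, and then recovers $v=\mathcal{A}r=r-\Delta r$. However, as written your argument has two genuine gaps. First, you carry out all quantitative estimates in $L^p(\Omega)$: from $\|w(s)-w(t)\|_{L^p}\le C|t-s|^{\alpha}$ and $\|\mathcal{A}e^{-\sigma\mathcal{A}/\tau}\|_{\mathcal{L}(L^p)}\le C\sigma^{-1}$ you can only conclude boundedness and time-H\"older continuity of $v$ with values in $L^p(\Omega)$, whereas the proposition asserts, and Lemma~\ref{lem.impreg3} later requires, values in $C^{\alpha}(\bar{\Omega})$; it is precisely the spatial H\"older continuity of $v$, uniform in time, that makes the coefficients $e^{-v(t)}$ of $\mathsf{A}(v(t))$ admissible for Amann's theory. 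To salvage your route you would have to run the same singular-convolution estimates in a (little) H\"older space over $\bar{\Omega}$, where $\|\mathcal{A}e^{-\sigma\mathcal{A}/\tau}\|\le C\sigma^{-1}$ still holds, obtain from Corollary~\ref{regularw} the time-H\"older continuity of $w$ with values in that space (interpolation between temporal and spatial H\"older regularity will in general cost part of the exponent), and justify $e^{-\sigma\mathcal{A}/\tau}u(s)=\mathcal{A}e^{-\sigma\mathcal{A}/\tau}w(s)$ by smoothing, since $u(s)=\mathcal{A}w(s)$ is merely continuous and $w(s)$ need not belong to the H\"older-space domain of $\mathcal{A}$.

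Second, the statement claims $1+\alpha$ regularity in time, i.e.\ control of $\partial_t v$, while your three-term splitting only yields boundedness and $C^{\alpha}$-in-time continuity of $v$ itself. Since $\tau\partial_t v=u-\mathcal{A}v=\mathcal{A}\bigl(w-v\bigr)$, the missing piece amounts to H\"older continuity in time of $\mathcal{A}v$, which is the maximal H\"older regularity part of the Sinestrari--Lunardi theory and requires a further layer of estimates (temporal H\"older continuity of $\int_0^t\mathcal{A}e^{-(t-s)\mathcal{A}/\tau}\bigl[w(s)-w(t)\bigr]\,\rd s$ together with a compatible treatment of the initial layer), none of which is sketched. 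The paper bypasses all of this by applying off-the-shelf parabolic regularity to the lifted unknown $r$, which delivers the spatial $C^{2+\alpha}$ bound and the time-derivative bound simultaneously, after which $v=\mathcal{A}r$ lands in the asserted class. So your plan is workable in principle---it is a hands-on semigroup rendition of the same lifting---but as it stands it neither produces the required spatial regularity nor reaches the asserted temporal regularity.
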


\begin{proof}
	Set $r \triangleq \mA^{-1}[v]$ and  $r^{in} \triangleq\mA^{-1}[v^{in}]$. In view of the regularity $v^{in}\in W^{1,\infty}(\Omega)$, there is $\alpha_0\in(0,1)$ such that $r^{in}\in C^{2+\alpha_0}(\bar{\Omega})$. Besides, we infer from \eqref{ks2} that $r$ is a solution to
	\begin{equation*}
		\begin{split}
			& \tau \partial_t r - \Delta r +  r = w\,, \qquad (t,x)\in (0,T_{\mathrm{max}})\times \Omega\,, \\
			& \nabla r\cdot \mathbf{n} =0\,, \qquad (t,x)\in (0,T_{\mathrm{max}})\times \partial\Omega\,, \\
			& r(0) = r^{in}\,, \qquad x\in \Omega\,,
		\end{split}
	\end{equation*}
	so that standard regularity theory of heat equations, along with Corollary~\ref{regularw}, ensures that $r$ belongs $BUC^{1+\alpha}(J_T, C^{2+\alpha}(\bar{\Omega}))$. As a result, we obtain that $v =  r-\Delta r\in BUC^{1+\alpha}(J_T, C^{\alpha}(\bar{\Omega}))$.
\end{proof}


\subsection{$L^p$-estimates for $u$.}
In this part, we first show that  $|\nabla v|$ is uniformly bounded in $L^r(\Omega)$ with some $r>3$ by semi-group theory. Then we establish $L^p$-boundedness of $u$ by standard energy method with any $p\in(1,\infty)$.
\begin{lemma}\label{lem.impreg3} Let  $T\in(0,T_{\mathrm{max}})$. For any
	$p\in(1,3)$ and $\theta\in \left( \frac{1+p}{2p},1 \right)$, there is $C(p,\theta)>0$ independent of $T$ and $T_{\mathrm{max}}$\,, such that
	\begin{equation*}
		\|w(t)\|_{W^{2\theta,p}} \le C(p,\theta)\,, \qquad t\in [0,T]\,. 
	\end{equation*}
\end{lemma}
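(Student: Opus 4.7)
The plan is to view $w$ as the mild solution of the inhomogeneous parabolic Cauchy problem $\partial_t w + \mathcal{A}w = G$ with $w(0)=w^{in}$, where the source
\[
G = u(1-e^{-v}) + \varphi - \psi + b_0
\]
is obtained by adding $\mathcal{A}w=u$ (from \eqref{defw}) to the key identity \eqref{keyid}. I would then estimate $\|\mathcal{A}^\theta w(t)\|_p$, which is equivalent to $\|w(t)\|_{W^{2\theta,p}}$, via Duhamel's formula
\[
\mathcal{A}^\theta w(t) = e^{-t\mathcal{A}}\mathcal{A}^\theta w^{in} + \int_0^t \mathcal{A}^\theta e^{-(t-s)\mathcal{A}} G(s)\,ds
\]
combined with the analytic semigroup smoothing bound $\|\mathcal{A}^\theta e^{-\tau\mathcal{A}}\|_{L^r\to L^p}\le C\tau^{-\theta-\frac{3}{2}(1/r-1/p)_+}e^{-\tau}$. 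The contribution of the initial datum is harmless because $u^{in}\in C(\bar\Omega)$ yields $w^{in}=\mathcal{A}^{-1}u^{in}\in W^{2,r}$ for every $r\in(1,\infty)$ by elliptic regularity, and hence $\|e^{-t\mathcal{A}}\mathcal{A}^\theta w^{in}\|_p\le e^{-t}\|\mathcal{A}^\theta w^{in}\|_p\le C$.

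For the Duhamel integral I would split $G=G_1+G_2$ with $G_2=\varphi-\psi+b_0$ and $G_1=u(1-e^{-v})$. The smooth part $G_2$ is uniformly bounded in $L^q(\Omega)$ for every $q\in[1,3)$: $\varphi\in L^\infty$ by Lemma \ref{wbound}, and Lemma \ref{lm2} applied to the uniform $L^1$-bound $\|uf(u)+b_0\|_1\le C$ from \eqref{fL1} yields $\psi\in L^q$ for any $q<3$. Picking $q\in[p,3)$ removes the Sobolev loss in the semigroup estimate, so the $G_2$-contribution is bounded by $C\int_0^\infty \tau^{-\theta}e^{-\tau}\,d\tau<\infty$ whenever $\theta<1$.

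The heart of the proof lies in handling the rough part $G_1=u(1-e^{-v})$. Since $0\le 1-e^{-v}\le 1$, one has only $\|G_1(s)\|_1\le\|u(s)\|_1\le C$ pointwise in time, and this bound alone is strictly too weak, forcing the restrictive condition $\theta<(3-p)/(2p)$ that is incompatible with the claimed range $\theta\in((1+p)/(2p),1)$. The way around is to feed in the time-weighted $L^2$-estimate
\[
\int_0^t e^{-\lambda(t-s)}\|u(s)\|_2^2\,ds \le C, \qquad \lambda\in(0,2),
\]
obtained by multiplying the dissipative identity \eqref{dual1} by $e^{\lambda t}$, integrating, and using $\|ue^{-v}\|_2\ge e^{-v^*}\|u\|_2$ together with the $L^\infty$-bound on $v$ from Lemma \ref{hvbound}. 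Interpolating $\|G_1\|_r\le\|u\|_1^{1-a(r)}\|u\|_2^{a(r)}$ between the pointwise $L^1$ and this weighted $L^2$ control, inserting the result into the semigroup smoothing bound, and applying Hölder's inequality in the time variable with exponents tuned so that the remaining time integral of $\|u\|_2^2$ is exactly the weighted $L^2$-estimate above, yields a uniform bound on the $G_1$-contribution.

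The main obstacle is the simultaneous calibration of the interpolation exponent $r\in(1,2)$, the time-Hölder exponents, and the semigroup-smoothing order $\theta$ so that the final admissible range of $\theta$ covers the full interval $((1+p)/(2p),1)$ for every $p\in(1,3)$; equivalently, one must make sure that the $(t-s)^{-\theta-\frac{3}{2}(1/r-1/p)_+}$ singularity remains time-integrable after Hölder's inequality, while the time factor absorbed into the weighted $L^2$-estimate carries exactly the right exponential weight. This coupling of the pointwise $L^1$ and the time-integrated $L^2$ information on $u$ is the genuinely three-dimensional ingredient distinguishing this section from the two-dimensional argument of Section 4.
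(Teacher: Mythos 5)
There is a genuine gap, and it sits exactly where you flag ``the main obstacle'': the calibration of exponents you are hoping for does not exist. Treating $G_1=u(1-e^{-v})$ as a forcing term for the constant-coefficient semigroup $e^{-t\mathcal{A}}$, the only information you have on $u$ is $\sup_t\|u(t)\|_1\le C$ and the time-averaged bound $\int_0^t e^{2(s-t)}\|u(s)\|_2^2\,\mathrm{d}s\le C$ (from \eqref{dual0}--\eqref{dual1} and the $L^\infty$-bound on $v$ in Lemma \ref{hvbound}). Interpolating $\|u\|_r\le\|u\|_1^{1-a}\|u\|_2^{a}$ with $a=2(r-1)/r$ and $1<r\le\min\{p,2\}$, the smoothing estimate produces the kernel $(t-s)^{-\theta-\frac32(\frac1r-\frac1p)}$, and since $\|u(\cdot)\|_2^{a}$ is only controlled in the weighted $L^{2/a}$-norm in time, H\"older (or Young) in time forces the kernel power into $L^{2/(2-a)}$, i.e. $\bigl(\theta+\frac32(\frac1r-\frac1p)\bigr)\frac{2}{2-a}<1$, which is $\theta<\frac{3}{2p}-\frac{1}{2r}$. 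Optimizing over $r$ gives at best $\theta<\frac1p$ when $p\le2$ and $\theta<\frac{3}{2p}-\frac14$ when $2<p<3$; both thresholds lie strictly below the lower endpoint $\frac{1+p}{2p}=\frac12+\frac1{2p}$ of the claimed range (for $p$ near $3$ you reach only $\theta<\frac14$ while the lemma needs $\theta>\frac23$; for $p\le 2$ the gap is $\frac{p-1}{2p}>0$). The exponential weights are not the issue -- the polynomial singularity is -- so for every $p\in(1,3)$ the set of $\theta$ your argument covers is empty, and the range $\theta>\frac{1+p}{2p}$ is precisely what is needed afterwards to reach $\nabla v\in L^r$ with $r>3$, so the loss cannot be absorbed later.

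The paper avoids this by never treating the $u$-term as a source. Since $u=\mathcal{A}w$, one writes $ue^{-v}=e^{-v}(w-\Delta w)$ and recasts \eqref{keyid} as a non-autonomous problem $\partial_t w+\tilde{\mathsf{A}}(t)w=F$, where $\tilde{\mathsf{A}}(t)$ is the $L^p$-realization of $-e^{-v(t)}\Delta$ (with Neumann condition and a shift $\mu$) and $F=\varphi-\psi-we^{-v}+\mu w+b_0$, which \emph{is} uniformly bounded in $L^p$ for $p<3$ thanks to the $L^\infty$-bounds on $w$, $\varphi$, $v$ and Lemma \ref{lm2} applied to $uf(u)+b_0$. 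The price is that uniform-in-time smoothing estimates for the associated fundamental solution $\tilde U(t,s)$ (Amann's theory) require the coefficient $e^{-v}$ to be H\"older continuous in space-time with a time-independent norm; that is exactly what Section 5.1 -- the De Giorgi-type local energy estimates for $w$, Corollary \ref{regularw} and Proposition \ref{prop.impreg2} -- provides, and exactly the ingredient your proposal discards. To salvage a Duhamel-type strategy you would either have to import that H\"older continuity (at which point you have reproduced the paper's proof) or obtain genuinely stronger space-time integrability of $u$ than the $L^\infty_tL^1_x$ and weighted $L^2_tL^2_x$ bounds, which the a priori estimates of Sections 2--3 do not supply.
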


\begin{proof} We set $D_T\triangleq  (-1,2v^*)$ and $J_T=[0,T]$, where $v^*$ is defined in  \eqref{bv}. For $s\in D_T$, we define the elliptic operator $\mathsf{A}(s)$ by $\mathsf{A}(s)z := -e^{-s}\Delta z$ and the boundary operator $\mathsf{B}z := \nabla z\cdot \mathbf{n}$. We point out that $\mathsf{A}(s)$ is strongly uniformly elliptic for $s\in D_T$ since  $\min_{D_T}\{\gamma\}>0$. Then, according to \cite[Theorem~4.2]{Aman1990}, $(\mathsf{A}(s),\mathsf{B})$ is normally elliptic for $s\in D_T$. For $t\in J_T$, let $\tilde{\mathsf{A}}(t)$ be the $L^p$-realization of $(\mathsf{A}(v(t)),\mathsf{B})$ with domain 
	\begin{equation*}
		W_{\mathsf{B}}^{2,p}(\Omega) \triangleq \{ z \in W^{2,p}(\Omega)\ :\ \nabla z\cdot \mathbf{n} = 0 \;\;\text{ on}\;\; \partial\Omega \}\,.
	\end{equation*}
	As in the proof of \cite[Theorem~6.1]{Aman1989},  Proposition~\ref{prop.impreg2} guarantee that 
	\begin{equation*}
		\tilde{\mathsf{A}} \in BUC^{\alpha}(J_T, \mathcal{L}(W^{2,p}(\Omega),L^p(\Omega)))
	\end{equation*}
	and that $\tilde{\mathsf{A}}(J_T)$ is a \textit{regularly bounded subset} of $C^{\alpha}(J_T,\mathcal{H}(W^{2,p}(\Omega),L^p(\Omega)))$ in the sense of \cite[Section~4]{Aman1988} (or, equivalently, the condition \cite[(II.4.2.1)]{Aman1995} is satisfied). By \cite[Theorem~A.1]{Aman1990}, see also \cite[Theorem~II.4.4.1]{Aman1995}, there is a unique parabolic fundamental solution $\tilde{U}$ associated to $\{ \tilde{\mathsf{A}}(t)\ :\ t\in J_T \}$ and there exist time-independent positive constants $K>0$ and $\omega>0$ such that 
	\begin{equation}
		\|\tilde{U}(t,s)\|_{\mathcal{L}(W^{2,p}(\Omega))} + 	\|\tilde{U}(t,s)\|_{\mathcal{L}(L^p(\Omega))} + (t-s) 	\|\tilde{U}(t,s)\|_{\mathcal{L}(L^p(\Omega),W^{2,p}(\Omega))} \le K e^{\omega(t-s)} \label{irb2}
	\end{equation}
	for $0\le s < t\leq T$. Since $\theta\in \left( \frac{1+p}{2p},1 \right)$, it follows from \cite[Theorem~5.2]{Aman1993} that
	\begin{equation*}
		\left( L^p(\Omega) , W_{\mathsf{B}}^{2,p}(\Omega) \right)_{\theta,p} \doteq W_{\mathsf{B}}^{2\theta,p}(\Omega) \triangleq \{ z \in W^{2\theta,p}(\Omega)\ :\ \nabla z\cdot \mathbf{n} = 0 \;\;\text{ on}\;\; \partial\Omega \}\,,
	\end{equation*}
	and we infer from \cite[Lemma~II.5.1.3]{Aman1995} that there is time-independent $K_{\theta}>0$ such that
	\begin{equation}
		\|\tilde{U}(t,s)\|_{\mathcal{L}(W_{\mathsf{B}}^{2\theta,p}(\Omega))} + (t-s)^\theta \|\tilde{U}(t,s)\|_{\mathcal{L}(L^p(\Omega),W_{\mathsf{B}}^{2\theta,p}(\Omega))} \le K_{\theta} e^{\omega(t-s)} \label{irb3}
	\end{equation}
	for $0\le s<t\leq T$. We then pick $\mu>\omega$ and deduce from \eqref{cpv} that $w$ solves 
	\begin{equation}
		\begin{split}
			\partial_t w + \big(\mu+\tilde{\mathsf{A}}(\cdot) )w  & = F\,, \qquad t\in J_T\,, \\
			w(0) & = w^{in} \,, 
		\end{split}\label{irb4}
	\end{equation}
	where
	\begin{equation*}
		F(t,x) \triangleq  \big( \varphi - \psi -  we^{-v}+\mu w\big)(t,x)+b_0\,, \qquad (t,x)\in J_T\times \Omega\,. 
	\end{equation*}
	We recall that, by Lemma \ref{lm2}, \eqref{fL1}, \eqref{wbound}, and \eqref{phib}, for any $1<p<3$ \label{cst7}
	\begin{equation}
		\| F(t)\|_p \le \big((2+\mu)w^* +b_0\big)|\Omega|^{\frac1p}+\|\psi\|_p\leq C \,, \qquad t\in J_T\,, \label{irb5}
	\end{equation}
	while the continuity of $u$, $v$ and $w$, see Proposition~\ref{prop2.1}, ensures that
	\begin{equation}
		F+\mu w \in C(J_T\times\bar{\Omega})\,. \label{irb8}
	\end{equation}
	Moreover, $w\in C(J_T,W^{2,p}(\Omega))\cap C^1((0,T],L^p(\Omega))$ satisfies $w(t)\in \mathrm{dom}(\tilde{\mathsf{A}}(t))$ for all $t\in (0,T]$. These properties, along with  \eqref{cpv} and \eqref{irb8}, then imply that $w$ is a solution to the linear initial-value problem~\eqref{irb4} in the sense of \cite[Section~II.1.2]{Aman1995} and we infer from \cite[Remarks~II.2.1.2~(a)]{Aman1995} that $w$  has the representation formula 
	\begin{equation}
		w(t) =  e^{-\mu t}\tilde{U}(t,0) w^{in} + \int_0^t e^{-\mu(t-s)} \tilde{U}(t,s) F(s)\ \mathrm{d}s\,, \qquad t\in J_T\,.\label{irb9}
	\end{equation}
	It then follows from  \eqref{irb3}, \eqref{irb5}, and \eqref{irb9} that, for $t\in J_T$,
	\begin{align}
		\|w(t)\|_{W^{2\theta,p}} & \le K_{\theta} e^{(\omega-\mu) t} \|w^{in}\|_{W^{2\theta,p}} + K_{\theta} \int_0^t (t-s)^{-\theta} e^{(\omega-\mu)(t-s)} \|F(s)\|_p\ \mathrm{d}s \nonumber \\
		& \le C(p,\theta)  + C K_{\theta} \int_0^t (t-s)^{-\theta} e^{(\omega-\mu)(t-s)} \ \mathrm{d}s\,\\
		&\leq C(p,\theta)\,, \label{irb11}
	\end{align}
	since
	\begin{equation*}
		\int_0^\infty  s^{-\theta} e^{(\omega-\mu) s}\ \mathrm{d}s <\infty\,,	\end{equation*}
	and the proof is complete.
\end{proof}

\begin{proposition}\label{propLpu}
	There is  $r>3$ and $C>0$ depending on $r$ and the initial data but  independent of $T$ and $T_{\mathrm{max}}$\,, such that
	\begin{equation}
		\|\nabla v\|_r\leq C,\qquad t\in J_T.
	\end{equation}	
\end{proposition}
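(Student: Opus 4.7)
The strategy is to upgrade the $W^{2\theta,p}$-regularity of $w$ established in Lemma~\ref{lem.impreg3} into an $L^r$-bound for $\nabla v$ with $r>3$. The ingredients are the Duhamel formula for $v$, the identity $u=\mathcal{A}w$ (which routes the $v$-source through the already-controlled function $w$), and the standard smoothing estimates for the analytic semigroup generated by $-\mathcal{A}/\tau$ on $L^p(\Omega)$.

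Since $v$ satisfies $\tau v_t + \mathcal{A}v = u = \mathcal{A}w$ with initial datum $v^{in}$, and $\mathcal{A}$ commutes with its semigroup on the relevant domains, the variation-of-constants formula yields
\begin{equation*}
v(t) = e^{-t\mathcal{A}/\tau}v^{in} + \frac{1}{\tau}\int_0^t \mathcal{A}\, e^{-(t-s)\mathcal{A}/\tau}\, w(s)\,\rd s,\qquad t\in J_T.
\end{equation*}
I would fix $p=2$ and $\theta\in(\tfrac{3}{4},1)$ so that Lemma~\ref{lem.impreg3} applies, and invoke the identification $\mathrm{dom}(\mathcal{A}^\theta)\doteq W^{2\theta,p}_\mathsf{B}(\Omega)$ (via \cite[Theorem~5.2]{Aman1993}) to conclude that $\sup_{s\in J_T}\|\mathcal{A}^\theta w(s)\|_p \leq C$. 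Factoring $\mathcal{A}=\mathcal{A}^{1-\theta}\mathcal{A}^\theta$ inside the integral and then applying $\nabla$ gives
\begin{equation*}
\nabla v(t) = \nabla e^{-t\mathcal{A}/\tau}v^{in} + \frac{1}{\tau}\int_0^t \nabla\mathcal{A}^{1-\theta}\, e^{-(t-s)\mathcal{A}/\tau}\bigl(\mathcal{A}^\theta w(s)\bigr)\,\rd s.
\end{equation*}

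Using the classical equivalence $\|\nabla f\|_r \leq C\|\mathcal{A}^{1/2}f\|_r$ on the Neumann-regular $W^{1,r}(\Omega)$ together with the standard smoothing estimate
\begin{equation*}
\|\mathcal{A}^{\mu}e^{-\sigma\mathcal{A}/\tau}\|_{\mathcal{L}(L^p, L^r)} \leq C\sigma^{-\mu - \frac{3}{2}(\frac{1}{p}-\frac{1}{r})}e^{-\delta\sigma/\tau}\qquad (\mu\geq 0,\ p\leq r),
\end{equation*}
applied with $\mu = \tfrac{3}{2}-\theta$, one obtains
\begin{equation*}
\bigl\|\nabla\mathcal{A}^{1-\theta}e^{-(t-s)\mathcal{A}/\tau}(\mathcal{A}^\theta w(s))\bigr\|_r \leq C(t-s)^{-\alpha_0}e^{-\delta(t-s)/\tau},
\end{equation*}
with $\alpha_0:=(\tfrac{3}{2}-\theta)+\tfrac{3}{2}(\tfrac{1}{p}-\tfrac{1}{r})$. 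Picking any $r\in(3,6)$ and $\theta$ sufficiently close to $1$ so that $\alpha_0<1$ (e.g.~$p=2$, $r=4$, $\theta=9/10$ gives $\alpha_0=39/40$) makes the time integral uniformly finite. The initial-data term is handled by $v^{in}\in W^{1,\infty}(\Omega)\hookrightarrow W^{1,r}(\Omega)$ and the semigroup's boundedness on $W^{1,r}$. Combining the two contributions yields $\|\nabla v(t)\|_r \leq C$ uniformly in $t\in J_T$ with $C$ independent of $T$.

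The main obstacle is not computational but foundational: one must justify the commutation of $\mathcal{A}^{1-\theta}$ with the analytic semigroup, the identification $\mathrm{dom}(\mathcal{A}^\theta)\doteq W^{2\theta,p}_\mathsf{B}(\Omega)$ for $\theta>(p+1)/(2p)$, and the norm equivalence $\|\nabla f\|_r \sim \|\mathcal{A}^{1/2}f\|_r$ under Neumann boundary conditions. All three are classical and fit squarely within Amann's framework already used for Lemma~\ref{lem.impreg3} (see \cite{Aman1993,Aman1995}), so one simply cites them and verifies the exponent arithmetic.
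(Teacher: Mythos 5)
Your argument is correct and rests on the same underlying strategy as the paper — feed the source of the $v$-equation through $u=\mathcal{A}w$ in the Duhamel formula so that the uniform $W^{2\theta,p}$-bound on $w$ from Lemma~\ref{lem.impreg3} is transferred to $v$ with a gain of regularity and an integrable time singularity — but the technical packaging is genuinely different. The paper keeps $p\in(2,3)$, views $u=\mathcal{A}w$ as a bounded element of the \emph{negative-order} space $W^{2\theta-2,p}_{\mathsf{B}}(\Omega)$, uses Amann's analytic semigroup on that scale together with real interpolation to bound $v$ in $W^{2\theta',p}$ with a $(t-s)^{\theta-\theta'-1}$ singularity, and then concludes by the Sobolev embedding $W^{2\theta',p}\hookrightarrow W^{1,r}$, $r>3$. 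You instead fix $p=2$, commute $\mathcal{A}=\mathcal{A}^{1-\theta}\mathcal{A}^{\theta}$ through the semigroup, and combine $L^{2}$--$L^{r}$ heat smoothing with the domain equivalence $\|\nabla f\|_r\sim\|\mathcal{A}^{1/2}f\|_r$; your exponent arithmetic is right ($\alpha_0=\tfrac32-\theta+\tfrac32(\tfrac12-\tfrac1r)<1$ for $r\in(3,6)$ and $\theta$ near $1$, e.g.\ $39/40$ for $r=4$, $\theta=9/10$), and starting from $t=0$ is legitimate because you only need $\nabla v^{in}\in L^r$, which avoids the paper's restriction to $[t_*,T]$. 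Two caveats you should make explicit: the identification $\mathrm{dom}(\mathcal{A}^{\theta})\doteq W^{2\theta,p}_{\mathsf{B}}$ via the cited real-interpolation result is clean precisely because $p=2$ (Hilbert case, where real and complex interpolation and fractional-power domains coincide); for general $p$ this step is more delicate, which is one reason the paper works with interpolation spaces rather than fractional powers. And the Neumann Riesz-transform/square-root equivalence $\|\nabla f\|_r\le C\|\mathcal{A}^{1/2}f\|_r$ on a smooth bounded domain is true but is a nontrivial input that the paper's route (Sobolev embedding of $W^{2\theta',p}$) sidesteps entirely. Your approach also caps $r$ below $6$, which is harmless here since only some $r>3$ is needed.
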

\begin{proof}
	We fix $p\in (2,3)$ and 
	\begin{equation*}
		\frac{1+p}{2p} < \theta' < \theta <1\,.
	\end{equation*} 
	From \eqref{defw} and Lemma~\ref{lem.impreg3}, we deduce that
	\begin{equation}
		\|u(t)\|_{W^{2\theta-2,p}} = \|\mA[w(t)]\|_{W^{2\theta-2,p}} \leq C \|w(t)\|_{W^{2\theta,p}} \leq C\,, \qquad t\in J_T\,. \label{y11}
	\end{equation}
	Introducing
	\begin{equation*}
		W_{\mathsf{B}}^{\xi,p}(\Omega) \triangleq \left\{ 
		\begin{array}{ll}
			\{ z \in W^{\xi,p}(\Omega)\ :\ \nabla z\cdot \mathbf{n} = 0 \;\;\text{ on}\;\; \partial\Omega \}\,, &  (p+1)/p < \xi \le 2\,, \\
			& \\
			W^{\xi,p}(\Omega)\,, & (1-p)/p < \xi < (p+1)/p \,,
		\end{array}
		\right.
	\end{equation*}
	see \cite[Section~7]{Aman1993}, and observing that the choice of $\theta$ and $p$ guarantees that $2\theta-2>(1-p)/p$, the realization in $W_{\mathsf{B}}^{2\theta-2,p}(\Omega)$ of $\mA$ generates an analytic semigroup in $W_{\mathsf{B}}^{2\theta-2,p}(\Omega)$,  see \cite[Theorem~8.5]{Aman1993}, and the interpolation space is characterized by
	\begin{equation*}
		\left( W_{\mathsf{B}}^{2\theta-2,p}(\Omega) , W_{\mathsf{B}}^{2\theta,p}(\Omega)\right)_{1+\theta'-\theta,p} = W_{\mathsf{B}}^{2\theta',p}(\Omega)
	\end{equation*}
	(up to equivalent norms). We then infer from \eqref{ks2}, \eqref{y2}, \eqref{y11} and \cite[Theorem~V.2.1.3]{Aman1995} that, for $t\in [t_*,T]$, 
	\begin{align*}
		\|v(t)\|_{W^{2\theta',p}} & \le \left\| e^{-\frac1\tau(t-t_*)\mA} v(t_*) \right\|_{W^{2\theta',p}} + \frac{1}{\tau}\int_{t_*}^t \left\| e^{-\frac1\tau(t-s)\mA} u(s) \right\|_{W^{2\theta',p}}\ \mathrm{d}s \\
		& \le C e^{- t/\tau} \|v(t_*)\|_{W^{2\theta',p}} + C \int_{t_*}^t (t-s)^{\theta-\theta'-1} e^{-(t-s)/\tau}\|u(s)\|_{W^{2\theta-2,p}}\ \mathrm{d}s \\
		& \le C \|v(t_*)\|_{W^{2,p}} + C \int_{0}^\infty s^{\theta-\theta'-1} e^{-s/\tau}\ \mathrm{d}s \\
		& \le C (1+ \|v(t_*)\|_{W^{2,p}})\,.
	\end{align*}
	As the choice of $\theta'$ and $p$ guarantees that $W^{2\theta',p}(\Omega)$ is continuously embedded in $W^{1,r}(\Omega)$ with some $r>3$, the above estimate implies that 
	\begin{equation}
		\|\nabla v(t)\|_r \le C\,, \qquad t\in [t_*,T]\,. \label{irb14}
	\end{equation}
	
With  the $W^{1,r}$-boundedness of $v$ in $[t_*,T]$ at hand, it becomes simple to derive the $L^q$-boundedness of $u$ with any $q>1$. To see this, let us multiply \eqref{ks1} by $u^{q-1}$, use \eqref{f1a} and integrate by parts to obtain that
\begin{equation}
	\begin{split}
		\frac{1}{q}\frac{\rd}{\rd t}\int_\Omega u^q\ \rd x & +(q-1)\int_\Omega e^{-v}u^{q-2}|\nabla u|^2\ \rd x+a_1\int_\Omega u^q\ \rd x \\
		& \leq(q-1)\int_\Omega e^{-v}u^{q-1}\nabla u\cdot\nabla v\ \rd x+b_1\int_\Omega u^{q-1}\,\rd x\,. 
	\end{split}\label{uLq0}
\end{equation}
By Young's inequality,  we infer that, for $t\in[t_*,T]$,
\begin{align*}
	&(q-1)\int_\Omega e^{-v}u^{q-1}\nabla u\cdot\nabla v\,\rd x+b_1\int_\Omega u^{q-1}\,\rd x\\
	&\leq \frac{q-1}{2}\int_\Omega e^{-v}u^{q-2}|\nabla u|^2\,\rd x+\frac{q-1}{2}\int_\Omega e^{-v}u^q|\nabla v|^2\ \rd x+\frac{a_1}{2}\int_\Omega u^q\ \rd x+C(a_1,b_1,q)\\
	&\leq \frac{q-1}{2}\int_\Omega e^{-v}u^{q-2}|\nabla u|^2\,\rd x + C(q)\int_\Omega u^q|\nabla v|^2\,\rd x+\frac{a_1}{2}\int_\Omega u^q\ \rd x+C(a_1,b_1,q)\,.
\end{align*}
On the one hand, thanks the lower boundedness of $e^{-v}$ by \eqref{bg},
\begin{equation*}
	\frac{q-2}{2}\int_\Omega e^{-v}u^{q-2}\,\rd x\geq c(q)\|\nabla u^{q/2}\|^2
\end{equation*}
On the other hand, we use \eqref{irb14}, H\"older's inequality, and Young's inequality to deduce that, for $t\in[t_*,T]$,
\begin{align*}
	\int_\Omega u^q|\nabla v|^2\,\rd x\leq &\|u^{q/2}\|_{\frac{2r}{r-2}}^2\|\nabla v\|_r^2\\
	\leq& C\|u^{q/2}\|_{\frac{2r}{r-2}}^2\\
	\leq &C\|u^{q/2}\|_6^{2\alpha}\|u\|_1^{2(1-\alpha)}\\
	\leq& C\|\nabla u^{q/2}\|_2^{2\alpha}\|u\|_1^{2(1-\alpha)}+C\|u\|_1^2\,\\
	\leq&\varepsilon \|\nabla u^{q/2}\|_2^2+C(\varepsilon)\|u\|_1^2
\end{align*}with any $\varepsilon>0$, since here 
\begin{equation}
	0<\alpha:=\frac{3qr-3r+6}{3qr-r}<1.
\end{equation}
Collecting the above estimates and pick $\varepsilon$ small, we arrive at
\begin{equation}
	\frac{1}{q}\frac{\rd}{\rd t}\int_\Omega u^q\ \rd x  +\frac{a_1}{2}\int_\Omega u^q\ \rd x \leq C(q)\,. 
\label{uLq1}	
\end{equation}
Solving the above differential  inequality then provides the uniform boundedness of $\|u\|_q$ in $[t_*,T]$, while that on $[0,t_*]$ is a consequence of Proposition~\ref{prop2.1}.
\end{proof}

\begin{proof}[Proof of Theorem~\ref{Th1} in 3D]
With the aid of Proposition~\ref{propLpu}, we may further use a standard bootstrap argument to prove that (see, e.g., \cite[Lemma A.1]{TW2012}), for any $0<T<T_{\mathrm{max}}$, there is $C>0$ independent of $T_{\mathrm{max}}$ and $T$ such that
\begin{equation*}
	\sup\limits_{0\leq t\leq T}\|u(t)\|_{\infty}\leq C\,.	
\end{equation*}	
	According to Proposition~\ref{prop2.1}, we deduce that $T_{\mathrm{max}}= \infty$ and thus Theorem~\ref{Th1} is proved.
\end{proof}

\section*{Acknowledgments}
Xiao acknowledges the support of Science Foundation of Hebei Normal
University (No. ~L2024B03). Jiang is supported by National Natural Science Foundation of China (NSFC)
under grants No.~12271505 \& No.~12071084,  the Training Program of Interdisciplinary Cooperation of Innovation Academy for Precision Measurement Science and Technology, CAS (No.~S21S3202), and by Knowledge Innovation Program of Wuhan-Basic Research (No.~2022010801010135).  Jiang thanks Prof. Philippe Lauren\c cot for inspiring discussions with him.
\bibliographystyle{siam}
\bibliography{PSE2023}

\end{document}